\newtheorem{theorem}{Theorem}[section]
\newtheorem{lemma}[theorem]{Lemma}
\newtheorem{proposition}[theorem]{Proposition}
\newdefinition{definition}[theorem]{Definition}
\newdefinition{remark}[theorem]{Remark}
\newdefinition{example}[theorem]{Example}
\journal{}
\begin{document}

\begin{frontmatter}



\title{Sparse recovery guarantees for block orthogonal binary matrices constructed via Generalized Euler Squares}


\author{Pradip Sasmal$^\dagger$, Phanindra Jampana and C. S. Sastry}
\address{$^\dagger$Dept. of ECE, Indian Institute of Science, Bangalore, India\\ email:~$^\dagger$\{pradipsasmal\}@iisc.ac.in, \\
Indian Institute of Technology, Hyderabad, Telangana. 502285, India. \\ Email:\{pjampana, csastry\}@iith.ac.in}




\begin{abstract}
In recent times, the construction of deterministic matrices 
has gained popularity as an alternative of random matrices as they provide guarantees for recovery of sparse signals. In particular, the construction of binary matrices has attained significance due to their potential for hardware-friendly implementation and appealing applications.
Our present work aims at constructing incoherent binary matrices consisting of orthogonal blocks with small block coherence. We show that the binary matrices constructed from Euler squares 
 exhibit block orthogonality and possess low block coherence. 
 With a goal of obtaining better aspect ratios, the present work generalizes the notion of Euler Squares and obtains a new class of deterministic binary matrices of more general size. For realizing the stated objectives, to begin with, the paper revisits the connection of finite field theory to Euler Squares and their construction. Using the stated connection, the work proposes Generalized Euler Squares (GES) and then presents a construction procedure. 
 Binary matrices
 with low coherence and general row-sizes are obtained, whose column size is in the maximum possible order. Finally, the paper shows that the special structure possessed by GES is helpful in resulting in block orthogonal structure with small block coherence, which supports the recovery of block sparse signals.

\end{abstract}

\begin{keyword}

Compressed Sensing, Euler Square, Generalized Euler Square, Block sparsity.


\end{keyword}

\end{frontmatter}


\section{Introduction}
Recent developments at the intersection of algebra, probability and optimization theory, by the name of Compressed Sensing (CS), aim at providing sparse descriptions to linear systems.  These developments are found to have tremendous potential for several applications \cite{bourgain_2011}\cite{gil_2010}. Sparse representations of a vector are a powerful analytic tool in many application areas such as image/signal processing and numerical computation \cite{Bruckstein_2009}, to name a few. The need for sparse representation arises from the fact that several real life applications demand expressing data in terms of as few basis elements as possible. 
The developments of CS theory depend typically on sparsity and incoherence \cite{Bruckstein_2009}\cite{Kashin_2007}. 

For minimizing the computational complexity associated with the matrix-vector multiplication, it is desirable that a CS matrix has smaller density. Here, a CS matrix refers to a matrix that satisfies sparse recovery properties and density refers to the ratio of number of nonzero entries to the total number of entries of the matrix. Sparse CS matrices, especially binary matrices, contribute to fast processing with low computational complexity in CS \cite{gil_2010}.  
The low coherence of a CS matrix, on the other hand,  provides guarantees for recovering sparse signals via basis pursuit (BP) and orthogonal matching pursuit (OMP). In case of recovery of block sparse signals, the block coherence of a CS matrix plays an important role \cite{eldar_2010}. 
To date, less attention has been paid to constructing binary matrices with small coherence which also possess low block coherence. Our present work is on the construction of block orthogonal binary matrices with small coherence and block coherence so that they support recovery of sparse signals and block sparse signals simultaneously.  

In recent literature on deterministic CS matrices, several authors \cite{ind_2008},  \cite{li_2014}, \cite{lu_2012}, \cite{li_2012}, \cite{amini_2011}, \cite{adcock_2013}, \cite{bryant_2014}, \cite{mixon_2012}, \cite{dima_2012}  have made pioneering contributions using novel ideas. Nevertheless, in most of these constructions, the associated CS matrices have been constructed for sizes that are dictated by a certain family of prime numbers. Additionally, none of the these constructions are known to
exhibit block orthogonal structure. 


Recently in \cite{ram_2016}, the authors have proposed a method to construct compressed sensing matrices from Euler Square. In particular, given a positive integer $m$ different from $p, p^2$ for a prime $p$, the  authors \cite{ram_2016} have shown that it is possible to construct a binary sensing matrix  of size $m \times c (m\mu)^2$ using Euler Squares, where $\mu$ is the coherence of the matrix (the maximum off-diagonal entry, in magnitude, of Gram matrix)  and $c \in [1,2).$   
One of the objectives of present work is to improve upon the aspect ratio (the ratio of column size to row size) of the CS matrices. To realize this objective,  we propose a generalization of the concept of the Euler Square, namely, Generalized Euler Square (GES). 
To begin with, we construct an Euler Square of index $p,k$, where $p$ is a prime or prime power, using polynomials of degree at most one over a finite field of order $p.$ 
Generalized Euler Square of index $p,k,t$ are then constructed using polynomials of degree at most $t.$ 
Using a composition rule 
a new GES for non-prime sizes are obtained from the combination of two GES of prime sizes. Further, we propose a methodology to construct compressed sensing matrices from GES. 
The matrices 
designed from GES show significant improvements in terms of column size.
In particular, the binary matrix constructed from GES($n,k,t$) has $n^{t}$ number of orthogonal blocks, each of size $nk\times n.$ We discuss the structure of the binary matrices generated from generalized Euler Squares, and show that they possess block orthogonality. We also derive that binary matrices constructed from Euler Square and GES possess low block coherence, providing thereby guarantees for recovering block sparse signals.
The contributions of the present work may be summarized as follows:
\begin{itemize}

\item presenting Euler Squares with a different perspective and associating them  with polynomials of degree at most one over a finite field.

\item establishing block orthogonal structure of binary matrices constructed via Euler Squares in \cite{ram_2016} and showing their compliance with block sparse recovery properties

\item introducing Generalized Euler Squares  (GES) and providing their construction procedure
\item bringing significant increment in column size compared to the ones obtained via Euler Squares
\item establishing block orthogonal structure of the GES based binary matrices.
\end{itemize}

 

\par The paper is organized in several sections. In section 2, we provide basics of CS theory and Euler Squares.  While in sections 3 and 4, we discuss  respectively the block sparse recovery through Euler Square based matrices and block orthogonality, and construction of Euler Square based matrices using polynomials of degree at most one over finite field. In sections 5 and 6, we introduce  Generalized Euler Squares and their construction respectively. In sections 7 and 8, we present respectively the construction of Euler Squares for composite order and construction of binary matrices using GES. In sections 9 and 10, we present GES as a rectangular array and the recovery guarantees for block sparse signals via GES. The paper ends with concluding remarks in Section 11. 


\section{Basics of Compressed Sensing and Euler Squares:}

\par The objective of compressed sensing is to recover $x = \{x_i\}^M_{i=1}\in \mathbb{R}^{M} $ from a few of its linear measurements $y\in \mathbb{R}^{m} $ through a stable and efficient reconstruction process via the concept of sparsity. From the measurement vector $y$ and the sensing mechanism, one  gets a system $y=\Phi x$, where $\Phi$ is an $m \times M \;(m < M)$ measurement matrix. 
An excellent overview of Compressed Sensing and the applicability of several sensing matrices may be seen in \cite{candes_2008}.

Given the pair $(y, \Phi)$, the problem of recovering $x$ can be formulated as finding the sparsest solution (solution containing most number of zero entries) of linear system of equations $y=\Phi x$. Sparsity is measured by $\| . \|_{0}$ ``norm". $\|x\|_{0}$ denotes the number of non-zero entries in $x$, that is,  $\|x\|_{0} = |\{j: x_j \neq 0\}|$, where $\| . \|_{0}$ is neither a norm nor a quasi-norm. 
 Now finding the sparsest solution  can be formulated as the following minimization problem, generally denoted as $P_{0}$ problem:
\begin{displaymath}
P_0:\min_{x} \Vert{x} \Vert_0 \; \mbox{subject to} \quad \Phi x=y.
\end{displaymath} 
This is a combinatorial optimization problem and is known to be NP-hard \cite{bourgain_2011}. One may use greedy methods and convex relaxation of $P_{0}$ problem to recover the $k-$sparse signals (i.e., signals for which $\|x\|_{0}\leq k$). The convex relaxation of $P_{0}$ problem can be posed as $P_{1}$ problem \cite{can_2008} \cite{Candes_2005}, which is defined as follows:
\begin{displaymath}
P_1:\min_{x} \Vert{x} \Vert_1 \; \mbox{subject to} \quad \Phi x=y.
\end{displaymath}
\par  Candes and Tao \cite{Tao_2006} have introduced the following isometry condition on matrices $\Phi$ and have established its important role in CS. An $m \times M$ matrix $\Phi$ is said to satisfy the Restricted Isometry Property(RIP) of order $k$ with constant $\delta_{k}$ $( 0<\delta_{k}<1)$ if for all vectors $x\in \mathbb{R}^{M}$ with $\|x\|_{0}\leq k$, we have
\begin{equation} 
\label{eqn:rip}
(1-\delta_{k}) \left\|x\right\|^{2}_{2} \leq \left\|\Phi x\right\|^{2}_{2} \leq (1+\delta_{k}) \left\|x\right\|^{2}_{2}.
\end{equation} 
Equivalently, for all vectors $x\in \mathbb{R}^{M}$ with $\left\|x\right\|_{2} = 1$ and $\|x\|_{0}\leq k$, one may rewrite (\ref{eqn:rip}) as
\begin{displaymath}
\label{eqn:r}
(1-\delta_{k})  \leq \left\|\Phi x\right\|^{2}_{2} \leq (1+\delta_{k}) .
\end{displaymath}
The following theorem \cite{Candes_2005} establishes the equivalence between $P_{0}$ and $P_{1}$ problems: 
\begin{theorem}
Suppose an $m \times M$ matrix $\Phi$ has the $(2k, \delta)$ restricted isometry property for some $\delta < \sqrt{2}-1$, then $P_{0}$ and $P_{1}$ have same $k-$sparse solution if $P_{0}$ has a $k-$sparse solution.
\end{theorem}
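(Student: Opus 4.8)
The plan is to prove the standard $\delta_{2k} < \sqrt{2} - 1$ recovery guarantee by the now-classical cone/null-space argument of Cand\`es. Let $x^{\star}$ be the assumed $k$-sparse solution of $P_0$ and let $\hat{x}$ be any solution of $P_1$; writing $h = \hat{x} - x^{\star}$, I first observe $h \in \ker \Phi$ since $\Phi \hat{x} = y = \Phi x^{\star}$. The goal is to show $h = 0$. Let $T_0$ be the support of $x^{\star}$, so $|T_0| \le k$.

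The first key step is the \emph{cone condition}. Since $\hat{x}$ is $\ell_1$-minimal, $\|x^{\star}\|_1 \ge \|x^{\star} + h\|_1 = \|x^{\star}_{T_0} + h_{T_0}\|_1 + \|h_{T_0^c}\|_1 \ge \|x^{\star}\|_1 - \|h_{T_0}\|_1 + \|h_{T_0^c}\|_1$, which forces $\|h_{T_0^c}\|_1 \le \|h_{T_0}\|_1$. Next I partition $T_0^c$ into blocks $T_1, T_2, \dots$ of size $k$ (or $2k$) in order of decreasing magnitude of the entries of $h$; the standard ``shifting'' inequality then gives $\sum_{j \ge 2} \|h_{T_j}\|_2 \le k^{-1/2}\|h_{T_0^c}\|_1 \le k^{-1/2}\|h_{T_0}\|_1 \le \|h_{T_0}\|_2 \le \|h_{T_0 \cup T_1}\|_2$.

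The second key step invokes RIP. Since $h \in \ker\Phi$, we have $0 = \Phi h = \Phi h_{T_0 \cup T_1} + \sum_{j \ge 2}\Phi h_{T_j}$, so $\|\Phi h_{T_0 \cup T_1}\|_2 = \|\sum_{j\ge 2}\Phi h_{T_j}\|_2$. Applying the lower RIP bound of order $2k$ to the left side and the upper RIP bound of order $k$ (hence also of order $2k$) termwise on the right, together with the near-orthogonality estimate $|\langle \Phi u, \Phi v\rangle| \le \delta_{2k}\|u\|_2\|v\|_2$ for vectors with disjoint supports of total size $\le 2k$, yields an inequality of the shape $(1 - \delta_{2k})\|h_{T_0\cup T_1}\|_2^2 \le \sqrt{2}\,\delta_{2k}\,\|h_{T_0 \cup T_1}\|_2 \sum_{j\ge 2}\|h_{T_j}\|_2$. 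Combining with the shifting inequality from the previous step gives $(1-\delta_{2k})\|h_{T_0\cup T_1}\|_2 \le \sqrt{2}\,\delta_{2k}\,\|h_{T_0\cup T_1}\|_2$, i.e. $\bigl(1 - (1+\sqrt{2})\delta_{2k}\bigr)\|h_{T_0\cup T_1}\|_2 \le 0$. Under the hypothesis $\delta_{2k} = \delta < \sqrt{2}-1$ the coefficient $1 - (1+\sqrt{2})\delta$ is strictly positive, forcing $h_{T_0 \cup T_1} = 0$, whence $\|h\|_1 = \|h_{T_0^c}\|_1 \le \|h_{T_0}\|_1 = 0$, so $h = 0$ and $\hat{x} = x^{\star}$.

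I expect the main technical obstacle to be the bookkeeping in the RIP step: correctly extracting the parallelogram-type bound $|\langle \Phi u,\Phi v\rangle|\le \delta_{2k}\|u\|_2\|v\|_2$ for disjointly supported $u,v$ from the RIP definition, and then tracking the $\sqrt{2}$ factor that arises from bounding $\|h_{T_0}\|_2 + \|h_{T_1}\|_2 \le \sqrt{2}\,\|h_{T_0\cup T_1}\|_2$ so that the final constant comes out to exactly $\sqrt{2}-1$. Everything else is a routine assembly of the cone condition and the norm-shifting inequality; none of it requires anything beyond the RIP hypothesis stated in~\eqref{eqn:rip}.
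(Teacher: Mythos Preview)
Your proposal is correct and is precisely the classical Cand\`es cone/null-space argument that underlies this result. Note, however, that the paper does not supply its own proof of this theorem: it is stated as a known result and attributed to \cite{Candes_2005} (with the sharp constant $\sqrt{2}-1$ coming from \cite{can_2008}), so there is no in-paper proof to compare against. Your outline matches the argument in those references; the only places to be careful when writing it out fully are exactly the two you flag---deriving the off-support inner-product bound $|\langle \Phi u,\Phi v\rangle|\le \delta_{2k}\|u\|_2\|v\|_2$ from the RIP via the parallelogram identity, and splitting $h_{T_0\cup T_1}$ into $h_{T_0}$ and $h_{T_1}$ before applying that bound so that the Cauchy--Schwarz step $\|h_{T_0}\|_2+\|h_{T_1}\|_2\le\sqrt{2}\,\|h_{T_0\cup T_1}\|_2$ produces the factor $\sqrt{2}$.
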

\par The coherence $\mu(\Phi)$ of a given matrix $\Phi$ is the largest absolute inner-product between different  normalized columns of $\Phi$, i.e., $\mu_{\Phi}= \max_{1\leq\; i,j \leq\; M,\; i\neq j} \frac{|\; \Phi_i ^T\phi_j|}{\Vert \Phi_i\Vert_{2} \Vert \Phi_j \Vert_2}$. Here, $\Phi_k$ stands for the $k$-th column in $\Phi$. 
\noindent The following proposition \cite{bourgain_2011} relates the RIP constant $\delta_{k}$ and $\mu_{\Phi}.$
\begin{proposition}
\label{thm:pro}
Suppose that $\phi_{1},\ldots,\phi_{M}$ are the unit norm columns of a matrix $\Phi$ with the 
coherence $\mu_{\Phi}.$ 
Then $\Phi$ satisfies RIP of order $k$ with constant $\delta_{k} = (k-1)\mu_{\Phi}$.
\end{proposition}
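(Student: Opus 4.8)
The plan is to reduce the statement to a spectral bound on the Gram matrices of $k$-column submatrices of $\Phi$ and then invoke Gershgorin's circle theorem. Fix an index set $S \subseteq \{1,\ldots,M\}$ with $|S| = s \le k$, and let $\Phi_S$ denote the $m \times s$ submatrix formed by the corresponding columns. For any $x \in \mathbb{R}^{M}$ supported on $S$ with $\|x\|_2 = 1$ we have $\|\Phi x\|_2^2 = \|\Phi_S x_S\|_2^2 = x_S^{T} G_S x_S$, where $G_S := \Phi_S^{T}\Phi_S$ is the Gram matrix of the chosen columns. Since the columns are unit-norm, all diagonal entries of $G_S$ equal $1$, and since the coherence of $\Phi$ is $\mu_{\Phi}$, every off-diagonal entry of $G_S$ is bounded in absolute value by $\mu_{\Phi}$. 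Because $x_S$ is a unit vector, $x_S^{T} G_S x_S$ lies between the smallest and largest eigenvalues of the symmetric matrix $G_S$, so it suffices to confine the spectrum of $G_S$ to the interval $[\,1-(k-1)\mu_{\Phi},\; 1+(k-1)\mu_{\Phi}\,]$.

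First I would apply Gershgorin's circle theorem to $G_S$: every eigenvalue $\lambda$ of $G_S$ lies in some disc centered at a diagonal entry $(G_S)_{ii} = 1$ with radius $\sum_{j \neq i} |(G_S)_{ij}|$. Each row of $G_S$ has at most $s-1 \le k-1$ off-diagonal entries, each of magnitude at most $\mu_{\Phi}$, so every such radius is at most $(k-1)\mu_{\Phi}$. Hence $|\lambda - 1| \le (k-1)\mu_{\Phi}$ for every eigenvalue $\lambda$ of $G_S$, which is exactly the desired containment. Combining this with the previous paragraph, for every $x$ with $\|x\|_0 \le k$ and $\|x\|_2 = 1$ we get $1-(k-1)\mu_{\Phi} \le \|\Phi x\|_2^2 \le 1+(k-1)\mu_{\Phi}$, i.e.\ $\Phi$ satisfies RIP of order $k$ with constant $\delta_k = (k-1)\mu_{\Phi}$ in the equivalent normalized form noted just after~(\ref{eqn:rip}).

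The argument is elementary; the only point needing a little care is the bookkeeping in the Gershgorin step — although the discs are nominally centered at the various diagonal entries, here all of those centers coincide (they equal $1$), so each disc is contained in the single disc of radius $(k-1)\mu_{\Phi}$ about $1$ and the entire spectrum is trapped in that one interval regardless of which row attains the largest off-diagonal sum. A secondary remark worth including is that the bound is informative only when $(k-1)\mu_{\Phi} < 1$, i.e.\ for $k < 1 + 1/\mu_{\Phi}$; this is the regime in which the proposition (and, through the $P_0$--$P_1$ equivalence theorem stated above, the sparse-recovery guarantee) has content, and it is precisely why the subsequent constructions aim at matrices of small coherence $\mu_{\Phi}$.
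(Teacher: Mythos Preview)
Your argument is correct: reducing RIP to a spectral bound on the Gram matrix $G_S$ of a $k$-column submatrix and then applying Gershgorin's theorem (diagonal entries equal to $1$, off-diagonal row sums bounded by $(k-1)\mu_{\Phi}$) is exactly the standard proof of this well-known fact, and every step you wrote is sound.

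There is nothing to compare against in the paper itself: the proposition is stated there without proof and attributed to \cite{bourgain_2011}. Your Gershgorin argument is the customary justification one finds in the compressed-sensing literature for this coherence-based RIP bound, so you have supplied the proof the paper omits. Your closing remark that the bound is only informative when $(k-1)\mu_{\Phi}<1$ is also appropriate and matches how the paper subsequently uses the proposition (e.g.\ in Theorem~\ref{thm:main}).
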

The Orthogonal Matching Pursuit (OMP) algorithm and and the $l_1-$norm minimization
(also called basis pursuit) are two widely studied CS reconstruction algorithms \cite{tropp_2010}. One of the important problems in CS theory deals with constructing CS matrices that satisfy the RIP for the largest possible range of $k$. It is known that the widest possible range of $k$ is of the order $\frac{m}{\log(\frac{M}{m})}$ \cite{Ronald_2007}, \cite{Richard_2008}, \cite{Kashin_1978}. However the only known matrices that satisfy the RIP for this range are based on random constructions \cite{Candes_2005}, \cite{Richard_2008}. 

\subsection{Block sparse signal recovery}
Block sparse signals are sparse signals 
where the nonzero coefficients occur in clusters. A signal $x\in \mathbb{R}^{M}$ is viewed as a concatenation of $R$ number of blocks of length $d$ with $M=Rd.$ Denote the $\ell-$th block as $x[\ell],$ then
$$x=(x^{T}[1], x^{T}[2], \dots, x^{T}[R]),$$ where $x^{T}[\ell]=(x_{(\ell-1)d+1},x_{(\ell-1)d+2}, \dots, x_{\ell d}).$
The measurements can then be written as
$$y=\Phi x=\sum^{R}_{\ell=1}\Phi[\ell]x[\ell],$$ where $\Phi[\ell]=[\phi_{(\ell-1)d+1}, \phi_{(\ell-1)d+2}, \dots, \phi_{\ell d}]$ and $\phi_{j}$ is the $j-$th column of $\Phi.$
The vector $x$ is called block $k-$sparse if $x[\ell]$ has nonzero Euclidean norm for at most $k$ indices $\ell$. When $d=1$, block  sparsity reduces to conventional sparsity. Define $\|x\|_{2,0}$ as 
$$\|x\|_{2,0}=\sum^{R}_{\ell=1}I(\|x[\ell]\|_{2}>0),$$ where $I(\cdot)$ is the indicator function. A block $k-$sparse signal $x$ is defined as the signal that satisfies $\|x\|_{2,0}\leq k$ \cite{eldar_2010}, by definition.
\begin{definition}\cite{eldar_2010} Block coherence of a matrix $\Phi$ with normalized columns is defined as 
$$\mu_{B_{\Phi}}= \frac{1}{d}\max_{\ell, \ell \neq r}\lambda^{\frac{1}{2}}_{\max}(M^{T}[\ell,r]M[\ell,r]),$$ where $M[\ell,r]=\Phi^{T}[\ell]\Phi[r]$ and $\lambda_{\max}(A)$ is the largest eigen value of a positive semidefinite matrix $A.$
\end{definition}
It is known that $0\leq \mu_{B_{\Phi}}\leq \mu_{\Phi}$ \cite{eldar_2010}.
    
     \begin{proposition}
    \label{block_coherence}\cite{eldar_2010}
    If $\Phi$ consists of orthogonal blocks, that is, $\Phi^{T}[\ell]\Phi[\ell]=I_{d\times d}, \, \forall \, \ell, $ then $\mu_{B_{\Phi}}\leq \frac{1}{d}.$
    \end{proposition}

\begin{theorem}
\label{thm:bomp}
\cite{eldar_2010}
A sufficient condition for the Block OMP to recover a block $k-$sparse signal $x$, with each block length $d$, from $y=\Phi x$ is 
$$kd<\frac{\mu^{-1}_{B_{\Phi}}+d}{2},$$ where $\Phi$ is block orthogonal, that is, $\Phi^{T}[\ell]\Phi[\ell]=I_{d\times d}, \, \forall \, \ell.$
\end{theorem}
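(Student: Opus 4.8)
The plan is to carry out the standard greedy-selection analysis of OMP in the block setting, using crucially that every block satisfies $\Phi^{T}[\ell]\Phi[\ell]=I_{d\times d}$. Recall that Block OMP, started from residual $r_{0}=y$ and empty active set $T_{0}$, at step $j$ selects the block index $\ell$ maximizing $\|\Phi^{T}[\ell]\,r_{j-1}\|_{2}$, adjoins it to the active set $T_{j}$, sets $\hat x_{j}$ to the least-squares solution of $y=\Phi x$ supported on $T_{j}$, and forms the new residual $r_{j}=y-\Phi\hat x_{j}$. Assume without loss of generality that the block support of the true signal is a set $\mathcal T$ with $|\mathcal T|\le k$. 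I would prove by induction on $j$ that (i) $T_{j}\subseteq\mathcal T$ and (ii) $r_{j}=\Phi\tilde z$ for some $\tilde z$ supported on $\mathcal T$, hence block $k$-sparse: since $y=\Phi x$ and $\hat x_{j}$ is supported on $T_{j}\subseteq\mathcal T$, the choice $\tilde z=x-\hat x_{j}$ works. The least-squares step also forces $\Phi^{T}[\ell]\,r_{j}=0$ for every $\ell\in T_{j}$, so a previously chosen block cannot be re-selected.

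The core step is to show that while $r_{j}\neq 0$ the algorithm picks a block of $\mathcal T$. For an incorrect block $\ell\notin\mathcal T$, the number $\lambda_{\max}^{1/2}(M^{T}[\ell,r]M[\ell,r])\le d\,\mu_{B_{\Phi}}$ is the largest singular value of $M[\ell,r]$, so $\|M[\ell,r]v\|_{2}\le d\,\mu_{B_{\Phi}}\|v\|_{2}$ for every $v$, whence
\[
\|\Phi^{T}[\ell]\,r_{j}\|_{2}=\Big\|\sum_{r\in\mathcal T}M[\ell,r]\,\tilde z[r]\Big\|_{2}\le d\,\mu_{B_{\Phi}}\sum_{r\in\mathcal T}\|\tilde z[r]\|_{2}\le k\,d\,\mu_{B_{\Phi}}\max_{r\in\mathcal T}\|\tilde z[r]\|_{2}.
\]
For the correct side, choose $\ell^{\star}\in\mathcal T$ with $\|\tilde z[\ell^{\star}]\|_{2}=\max_{r\in\mathcal T}\|\tilde z[r]\|_{2}$; using $\Phi^{T}[\ell^{\star}]\Phi[\ell^{\star}]=I_{d\times d}$, the triangle inequality, and the same singular-value bound,
\[
\|\Phi^{T}[\ell^{\star}]\,r_{j}\|_{2}\ge\|\tilde z[\ell^{\star}]\|_{2}-\!\!\!\sum_{r\in\mathcal T,\,r\neq\ell^{\star}}\!\!\!\|M[\ell^{\star},r]\,\tilde z[r]\|_{2}\ge\big(1-(k-1)\,d\,\mu_{B_{\Phi}}\big)\|\tilde z[\ell^{\star}]\|_{2}.
\]
Comparing the two displays, the block $\ell^{\star}$ has strictly larger correlation than any incorrect block — so the maximizing block lies in $\mathcal T$ — as soon as $1-(k-1)d\,\mu_{B_{\Phi}}>k\,d\,\mu_{B_{\Phi}}$, i.e. $d\,\mu_{B_{\Phi}}(2k-1)<1$, which is precisely $kd<\tfrac{1}{2}\big(\mu_{B_{\Phi}}^{-1}+d\big)$.

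The one subtlety is whether $\ell^{\star}$ could already lie in $T_{j}$. If the maximum of $\|\tilde z[\cdot]\|_{2}$ over $\mathcal T$ were attained at some $\ell_{0}\in T_{j}$, the orthogonality relation $\Phi^{T}[\ell_{0}]\,r_{j}=\tilde z[\ell_{0}]+\sum_{r\neq\ell_{0}}M[\ell_{0},r]\tilde z[r]=0$ would force $\|\tilde z[\ell_{0}]\|_{2}\le(k-1)d\,\mu_{B_{\Phi}}\|\tilde z[\ell_{0}]\|_{2}$; since $(k-1)d\,\mu_{B_{\Phi}}<1$ under the hypothesis, this yields $\tilde z[\ell_{0}]=0$, hence $\tilde z=0$ and $r_{j}=0$, so that $x$ has already been recovered (the blocks indexed by $\mathcal T$ are linearly independent, by a block-Gershgorin estimate whose Gram matrix is $I$ plus a perturbation of spectral norm $(k-1)d\,\mu_{B_{\Phi}}<1$). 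Hence, as long as $r_{j}\neq0$, we have $\ell^{\star}\in\mathcal T\setminus T_{j}$ and Block OMP selects a block of $\mathcal T$ with nonzero correlation — necessarily a new one, since every $\ell\in T_{j}$ has zero correlation. The active set therefore grows inside $\mathcal T$ at each iteration, so after at most $k$ steps it contains the whole true support, the least-squares projection reproduces $x$ exactly, and the residual vanishes. I expect the main obstacle to be exactly this per-iteration bookkeeping: keeping the correlation comparison valid in the degenerate case where the dominant residual block is already selected, and maintaining throughout the run the invariant that the residual remains block $k$-sparse in the true dictionary.
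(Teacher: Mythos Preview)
The paper does not supply its own proof of this theorem: it is quoted verbatim as a result of Eldar, Kuppinger and B\"olcskei \cite{eldar_2010}, so there is nothing in the paper to compare against. Your argument is essentially the standard greedy-selection analysis from that reference, specialized to the case of orthonormal blocks (sub-coherence zero), and it is correct; the per-iteration comparison $1-(k-1)d\mu_{B_{\Phi}}>k\,d\,\mu_{B_{\Phi}}$ is exactly the computation leading to $kd<\tfrac{1}{2}(\mu_{B_{\Phi}}^{-1}+d)$. One small remark: once you deduce $\tilde z[\ell_{0}]=0$ and hence $\tilde z=0$, the equality $x=\hat x_{j}$ follows immediately from your definition $\tilde z=x-\hat x_{j}$, so the block-Gershgorin injectivity argument is not needed at that point --- it is only needed (and you use it implicitly) at the final step, to guarantee that once $T_{j}=\mathcal T$ the least-squares fit reproduces $x$.
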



\subsection{Euler Square and  Generalized Latin square}
An Euler Square (ES) of order $n$, degree $k$ and index $n,k$ is a square array of $n^2$ $k-$ads of numbers, $(a_{ij1},a_{ij2},\ldots,
a_{ijk})$, where $a_{ijr} = 0,1,2,\ldots,n-1; r = 1,2,\ldots,k; i,j = 1,2,\ldots,n; n > k; a_{ipr} \neq a_{iqr}$ and $a_{pjr} \neq a_{qjr}$ for $p \neq q$ and $(a_{ijr},a_{ijs}) \neq (a_{pqr},a_{pqs})$ for $i \neq p$ and $j \neq q.$ In short we denote an Euler Square of order $n$, degree $k$ and index $n,k$ as ES($n,k$). Harris F. MacNeish \cite{euler_1922} has constructed Euler Squares for the following cases:
\begin{enumerate}
\item Index $p,p-1$, where $p$ is a prime number
\item Index $p^r, p^{r}-1$, for $p$ prime
\item Index $n,k$, where $n = 2^{r}p^{r_{1}}_{1}p^{r_{2}}_{2}\ldots, p^{r_{l}}_{l}$ for distinct odd primes $p_{1},p_{2},\ldots, p_l$. Here, $k = 
\min \{ 2^{r},p^{r_{1}}_{1},p^{r_{2}}_{2},\ldots ,p^{r_{l}}_{l} \} - 1.$
\end{enumerate}

\begin{lemma}\cite{euler_1922}\label{lem:e}
Let $k' < k$. Then the existence of the Euler Square of index $n,k$ implies that the Euler Square of index $n,k'$ exists.
\end{lemma}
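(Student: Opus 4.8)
The plan is to obtain the smaller Euler Square directly from the given one by projecting away some of the coordinate slots. Suppose we are handed an ES($n,k$), i.e.\ an $n\times n$ array whose $(i,j)$ entry is a $k$-ad $(a_{ij1},\ldots,a_{ijk})$ satisfying the four defining conditions recalled above. Since $k'<k$, I would form the new $n\times n$ array whose $(i,j)$ entry is the truncated $k'$-ad $(a_{ij1},\ldots,a_{ijk'})$, that is, I would simply discard the last $k-k'$ coordinates. (Any fixed choice of $k'$ out of the $k$ coordinate slots would serve equally well; truncation is merely the most convenient bookkeeping.)

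Next I would verify, one at a time, the defining conditions of an ES($n,k'$) for this truncated array. The alphabet condition $a_{ijr}\in\{0,1,\ldots,n-1\}$ for $r\le k'$ is inherited verbatim, and $n>k'$ follows from $n>k>k'$. The row condition $a_{ipr}\neq a_{iqr}$ and the column condition $a_{pjr}\neq a_{qjr}$ for $p\neq q$ were assumed to hold for every $r\in\{1,\ldots,k\}$, hence in particular for every $r\in\{1,\ldots,k'\}$. The Graeco--Latin orthogonality condition $(a_{ijr},a_{ijs})\neq(a_{pqr},a_{pqs})$ whenever $i\neq p$ and $j\neq q$ is needed only for pairs $r,s\le k'$, and every such pair already lies in the range $\{1,\ldots,k\}$ where the condition is known to hold. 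Thus the truncated array is an ES($n,k'$), which proves the lemma.

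I do not expect any genuine obstacle here. The essential observation is that each defining constraint of an Euler Square is \emph{monotone} in the degree: the definition is a conjunction of conditions, each of which refers either to a single coordinate slot $r$ or to a pair of slots $r,s$, all quantified inside $\{1,\ldots,k\}$. Deleting coordinate slots therefore can only remove constraints, never create a violation. The one point worth a second glance is to confirm that no defining condition couples a retained slot with a deleted one in a way that could fail after truncation; inspecting the definition shows that every condition stays entirely within the retained index set once we restrict to $\{1,\ldots,k'\}$, which completes the argument.
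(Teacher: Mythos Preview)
Your argument is correct: truncating each $k$-ad to its first $k'$ coordinates preserves all the defining conditions, since each condition is stated coordinatewise or pairwise and therefore restricts cleanly to any subset of coordinate slots. The paper does not supply its own proof of this lemma---it is quoted as a result of MacNeish~\cite{euler_1922}---but the truncation argument you give is exactly the standard one, and there is nothing further to compare.
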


An Euler Square of degree one is called a Latin Square and of degree two a Graeco-Latin Square \cite{euler_1922,bose_1938}. Euler squares
are also called as mutually orthogonal Latin squares. 
In \cite{hedayat_1970, shen_1989}, the authors have proposed generalization of latin squares and their orthogonality.

\section{Block sparse signal recovery for  matrices constructed from Euler Square}
\label{ES_block_sparse}
The authors of \cite{ram_2016} have constructed binary matrix $\Phi^{n}_{k}$ of size $nk \times n^{2}$ with coherence at most $\frac{1}{k}$, whenever ES($n,k$) exists. In this section, it is shown that the binary matrices obtained from Euler Squares also contain a block orthogonal structure. 


An Euler square can be observed to be an $n\times n$ square array of $k$-tuples with the following properties:
\begin{itemize}
	\item (ES 1) : each array entry is a $k-$tuple of numbers obtained from $\{0,\dots, n-1\}$
	\item (ES 2) : there is no intersection between any two $k-$tuples on the same row and same column
	\item (ES 3) : there is at most one intersection between any two distinct $k-$tuples which are not from the same
	row or column
\end{itemize}

Here intersection between two $k-$tuples denotes the number of indices where the corresponding entries of both the tuples are same.

\noindent For example, the Euler Square of index $3,2$ is stated as

$$0,0 \ \ \ \  1,1 \ \ \ \ 2,2 $$
$$1,2 \ \ \ \  2,0 \ \ \ \ 0,1 $$
$$2,1 \ \ \ \  0,2 \ \ \ \ 1,0 $$
\subsection{Block orthogonality}

\begin{theorem}
\label{es_block_sparse}
If ES$(n,k)$ exists, then a sparse matrix of size $nk \times n^{2}$ exists which consists of $n$ orthonormal blocks, each of size $nk \times n.$  
\end{theorem}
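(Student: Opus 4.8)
The plan is to construct the matrix explicitly from ES$(n,k)$ and then verify the block structure directly. First I would set up the column structure: index the $n^2$ columns of $\Phi^n_k$ by the pairs $(i,j)$ with $i,j \in \{1,\dots,n\}$ (equivalently, by the $n^2$ cells of the Euler square), and group them into $n$ blocks $\Phi[1],\dots,\Phi[n]$ by fixing the row index $i$ of the array (so block $\Phi[i]$ consists of the $n$ columns corresponding to the $k$-tuples appearing in row $i$ of the square). Next I would describe the rows of $\Phi^n_k$: there are $nk$ rows, naturally indexed by pairs $(r,v)$ with $r \in \{1,\dots,k\}$ (which coordinate of the $k$-tuple) and $v \in \{0,\dots,n-1\}$ (which symbol value). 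The entry of $\Phi^n_k$ in row $(r,v)$ and column $(i,j)$ is $1$ if the $r$-th coordinate of the $k$-tuple $a_{ij\cdot}$ sitting in cell $(i,j)$ equals $v$, and $0$ otherwise. Each column then has exactly $k$ ones (one per coordinate), so after scaling by $1/\sqrt{k}$ every column is a unit vector.

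The core of the argument is then a counting computation using properties (ES 1)--(ES 3). Fix a block $\Phi[i]$ and two distinct columns $(i,j)$ and $(i,j')$ within it; these correspond to two $k$-tuples lying in the same row of the array, so by (ES 2) they have no common coordinate value, hence the two $0/1$ columns have disjoint supports and their inner product is $0$. Combined with the fact that each column has squared norm $k$ (before normalization), this shows $\Phi^T[i]\Phi[i] = k\, I_{n\times n}$, i.e. after normalizing each block by $1/\sqrt{k}$ we get $\Phi^T[i]\Phi[i] = I_{n\times n}$. I would present this as the main claim and note that it is exactly the orthonormal-block property demanded by the theorem. For completeness I might also remark (though the theorem does not require it) that for columns $(i,j)$ and $(i',j')$ from \emph{different} blocks, (ES 3) gives at most one shared coordinate value, so off-block inner products are at most $1$, which recovers the coherence bound $\mu \le 1/k$ of \cite{ram_2016} and frames the block structure as a refinement of the coherence estimate.

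The main obstacle, such as it is, is bookkeeping rather than mathematical depth: one has to be careful that the row-indexing $(r,v)$ genuinely produces $nk$ rows and that the "no intersection on the same row" clause of (ES 2) is the precise statement needed for intra-block orthogonality (as opposed to the same-column clause, which is irrelevant to this particular blocking). A secondary subtlety is to make sure the matrix described here coincides with the $\Phi^n_k$ of \cite{ram_2016} so that the claimed size $nk \times n^2$ and the citation are consistent; I would phrase the construction so that this identification is transparent. Once the indexing conventions are pinned down, the orthogonality of each block is an immediate consequence of the disjoint-support observation, and the theorem follows by assembling the $n$ blocks side by side and dividing by $\sqrt{k}$.
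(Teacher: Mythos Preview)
Your argument is correct and follows essentially the same route as the paper: build $\Phi^n_k$ from the $k$-tuples, group the $n^2$ columns according to one index of the Euler array, and use the no-intersection clause of (ES~2) to get disjoint supports, hence orthogonality after the $1/\sqrt{k}$ normalization. The only cosmetic difference is that you block by the \emph{row} index of the square and invoke the same-row part of (ES~2), whereas the paper blocks by the \emph{column} index and invokes the same-column part; by the row/column symmetry built into the definition of an Euler square ($a_{ipr}\neq a_{iqr}$ and $a_{pjr}\neq a_{qjr}$), the two choices are interchangeable and yield the same conclusion.
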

\begin{proof}
From the construction procedure given in \cite{ram_2016}, a binary matrix $\Phi^{n}_{k}$ of size $nk \times n^{2}$ is obtained from ES($n,k$). Every column of $\Phi^{n}_{k}$ corresponds to a unique $k-$tuple of ES($n,k$). We arrange the columns of $\Phi^{n}_{k}$ to form a block orthogonal matrix. We form the $\ell-$th block (of block size $n$) by taking $n$ columns of $\Phi^{n}_{k}$ corresponding to $n$ $k-$tuples coming from $\ell-$th column of ES($n,k$). From the definition of Euler Squares, two distinct $k-$tuples belonging to same column of an Euler Square do not have any intersection. As a result, the inner product between any two distinct columns within a block is zero, implying thereby that each block is orthogonal. Consequently, the block matrix  $\frac{1}{\sqrt{k}}\Phi^{n}_{k}$ has $n$ orthonormal blocks, where each block is of size $nk\times n.$
\end{proof}
For example, let $\Phi^{3}_{2}$ be the binary matrix constructed from the ES($3,2$). Now the blocks of $\Phi^{3}_{2}$ corresponding to first, second and third columns are  

$\begin{pmatrix}
   1 & 0 & 0  \\
    0 & 1 & 0  \\
    0 & 0 & 1 \\
    1 & 0 & 0  \\
    0 & 0 & 1  \\
  0 & 1 & 0 
   \end{pmatrix}, $
$   \begin{pmatrix}
    0 & 0 & 1  \\
    1 & 0 & 0  \\
    0 & 1 & 0  \\
   0 & 1 & 0  \\
    1 & 0 & 0  \\
   0 & 0 & 1 
    \end{pmatrix}$ and $\begin{pmatrix}
    0 & 1 & 0 \\
    0 & 0 & 1 \\
    1 & 0 & 0 \\
    0 & 0 & 1 \\
    0 & 1 & 0 \\
    1 & 0 & 0 
    \end{pmatrix}$ respectively.

\noindent Now it is easy to see that the  blocks are orthogonal. Therefore $\Phi^{3}_{2}$ consists of $3$ orthogonal blocks, where each block is of size $6\times 3.$


\section{Construction of Euler Squares using finite fields}
\label{sec:new_construction}
In \cite{euler_1922}, the author has used group theoretical results to construct Euler Squares. 
In \cite{bose_1938}, R. C. Bose has constructed Euler square using polynomials over finite field. In this section, we revisit the construction given in \cite{bose_1938} and present a construction of Euler square using polynomials of degree at most one over finite fields. 


\subsection{Construction procedure for prime or prime power order}
\begin{theorem}
Suppose $p$ is prime prime power. Then ES($p,p-1$) exists.
\end{theorem}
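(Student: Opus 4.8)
The plan is to give an explicit algebraic construction of $\mathrm{ES}(p,p-1)$ using the finite field $\mathbb{F}_p$ of order $p$, following the idea announced in the text of attaching to each cell a family of affine (degree $\le 1$) polynomials. Index the rows and columns of the array by the elements of $\mathbb{F}_p$, say by $i,j \in \{0,1,\dots,p-1\}$ identified with $\mathbb{F}_p$. For each nonzero slope $r \in \mathbb{F}_p^{\ast} = \{1,2,\dots,p-1\}$ (there are exactly $p-1$ of these, matching the degree $k = p-1$), define the $r$-th coordinate of the $(i,j)$ cell to be $a_{ijr} := i + r\cdot j \in \mathbb{F}_p$, i.e. the value at $j$ of the affine polynomial $x \mapsto i + rx$. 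So the $k$-tuple placed in cell $(i,j)$ is $\bigl(a_{ij1},\dots,a_{ij,p-1}\bigr) = (i+1\cdot j,\ i+2\cdot j,\ \dots,\ i+(p-1)j)$, with all arithmetic in $\mathbb{F}_p$.

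The verification then consists of checking the three defining properties of an Euler square against field axioms. First, each $a_{ijr}$ clearly lies in $\{0,\dots,p-1\}$, so (ES 1) holds. Second, fix a coordinate $r$; the Latin-square property requires $a_{ipr}\neq a_{iqr}$ for $p\neq q$ (distinctness along a row) and $a_{pjr}\neq a_{qjr}$ for $p\neq q$ (distinctness along a column). Along row $i$: $a_{ipr} = a_{iqr}$ means $i + rp = i + rq$, hence $r(p-q)=0$ in $\mathbb{F}_p$; since $r\neq 0$ and $\mathbb{F}_p$ is a field (no zero divisors), this forces $p=q$. Along column $j$: $a_{pjr}=a_{qjr}$ means $p + rj = q + rj$, hence $p=q$ immediately. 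So each coordinate array is a Latin square, giving (ES 2). Third, I need the mutual-orthogonality / at-most-one-intersection condition: for $r\neq s$, the pairs $(a_{ijr},a_{ijs})$ are all distinct as $(i,j)$ ranges over $\mathbb{F}_p^2$, which is equivalent to saying two distinct cells agree in at most one of the two chosen coordinates. Suppose $(a_{ijr},a_{ijs}) = (a_{i'j'r},a_{i'j's})$. This is the linear system $i + rj = i' + rj'$ and $i + sj = i' + sj'$; subtracting gives $(r-s)(j - j') = 0$, and since $r\neq s$ this forces $j = j'$, whence $i = i'$. Thus the pair map is injective on $\mathbb{F}_p^2$, which (since both sides have $p^2$ elements) is a bijection onto $\mathbb{F}_p^2$ — precisely the orthogonality condition $(a_{ijr},a_{ijs})\neq(a_{pqr},a_{pqs})$ for $(i,j)\neq(p,q)$, giving (ES 3).

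Finally I would remark on the prime-power case: replacing $\mathbb{F}_p$ by $\mathbb{F}_q$ with $q = p^r$ and fixing an enumeration of its $q-1$ nonzero elements as the slopes, the identical argument — using only that $\mathbb{F}_q$ is a field (commutativity, no zero divisors, and that a nonzero slope difference is invertible) — yields $\mathrm{ES}(q,q-1)$; the "prime power" in the statement is handled verbatim. Since every step reduces to the single fact that nonzero elements of a field are cancellable, the construction goes through uniformly.

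I expect no serious obstacle here; the argument is essentially a bookkeeping exercise in translating the combinatorial axioms (ES 1)–(ES 3) into three one-line field computations. The only mild subtlety is being careful about what "intersection" and "orthogonality" mean coordinate-by-coordinate versus for the whole tuple — one must make sure the chosen indexing (rows/columns by constant term $i$, coordinates by slope $r$) actually produces the combinatorial object defined in Section 2, rather than merely a set of mutually orthogonal Latin squares in disguise; but these are the same object, so the match is immediate. A secondary point worth stating explicitly is that the number of coordinates we obtain is exactly $|\mathbb{F}_p^\ast| = p-1$, which realizes the maximal degree $k = p-1$ asserted in the theorem (and is consistent with Lemma \ref{lem:e}, which then yields $\mathrm{ES}(p,k')$ for all $k' < p-1$ by simply discarding slopes).
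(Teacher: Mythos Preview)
Your proof is correct and follows essentially the same finite-field/affine-polynomial approach as the paper. The only difference is a harmless swap of index roles: the paper lets the row and column of the array record the polynomial's slope and constant term and uses the nonzero field elements as \emph{evaluation points} indexing the $k$ coordinates (so $a_{ijr}=f_i f_r+f_j$), whereas you let row and column record the constant term and the evaluation point and use the nonzero \emph{slopes} to index the coordinates (so $a_{ijr}=i+rj$); the paper's indexing is chosen so that replacing degree-$1$ by degree-$t$ polynomials immediately gives the GES construction of Section~\ref{sec:GES}.
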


\begin{proof}
Let $D^{p}_{1}=\{P^{1}_{ij}=f_{i}x+f_{j}:i,j=1,2,\dots,p\}$ denote polynomials of degree at most one over a finite 
field $\mathbb{F}_{p}=\{f_{0}=0,f_{1}\dots,f_{p-1}\}$ of 
order $p.$ There are $p^{2}$ number of polynomials of degree at most one. Form the $k-$tuple, $S_{k}=
(f_{1},\dots,f_{k}) ,$ for $1\leq k\leq p-1.$ 
Evaluating a polynomial $P^{1}_{ij}$ of $D^{p}_{1}$ at every point of $S_{k},$ we form an ordered $k-$tuple 
$P^{1}_{ij}(S_{k})=(P^{1}_{ij}(f_{1}),\dots,P^{1}_{ij}(f_{k}))\in \mathbb{F}^{k}_{p}.$ Let us denote 
$S^{1}_{k}=\{P^{1}_{ij}(S_{k}): i,j=1,\dots,p\}\subseteq \mathbb{F}^{k}_{p}.$ It can be noted that
$|S^{1}_{k}|=p^{2}.$ To make $P^{1}_{ij}(S_{k_{p}})$ a $k-$tuple of numbers on $\{0,1,\cdots,p-1\},$ we replace 
$f_{i}$ with its index $i$.

We now claim that $S^{1}_{k}$ forms an Euler square of index $p,k$, where $i$ and $j$ denote row and column indices respectively. 
To show that $S^{1}_{k}=\{P^{1}_{ij}(S_{k})=(P^{1}_{ij}(f_{2}),\dots,P^{1}_{ij}(f_{k+1})\}: i,j=1,\dots,p\}$ 
forms an Euler Square of index $p,k,$ we need to show that, for $q,s=1,\dots,k$, $P^{1}_{in}(f_{q}) \neq 
P^{1}_{im}(f_{q})$ and $P^{1}_{nj}(f_{q}) \neq P^{1}_{mj}(f_{q})$ for $n \neq m$ and $(P^{1}_{ij}(f_{q}),P^{1}_{ij}
(f_{s})) \neq (P^{1}_{nm}(f_{q}),P^{1}_{nm}(f_{s}))$ for $i \neq n$ and $j \neq m.$

For $n \neq m,$ $P^{1}_{in}=f_{i}x+f_{n}$ and $P^{1}_{im}=f_{i}x+f_{m}$ do not have any common root, which shows that $P^{1}_{in}(f_{q}) \neq P^{1}_{im}(f_{q}).$ For $n \neq m,$ $P^{1}_{nj}=f_{n}x+f_{j}$ and $P^{1}_{mj}=f_{m}x+f_{j}$ have one common root at $f_{1}=0,$ which shows that $P^{1}_{nj}(f_{q}) \neq P^{1}_{mj}(f_{q}),$ as $1\neq q.$
For $i \neq n$ and $j \neq m,$ $P^{1}_{ij}$ and $P^{1}_{nm}$ can have at most one common root, which shows that $(P^{1}_{ij}(f_{q}), P^{1}_{ij}(f_{s})) \neq (P^{1}_{nm}(f_{q}),P^{1}_{nm}(f_{s})).$
Therefore, for prime or prime power $p$ and  $k\leq p-1$, one can construct $\text{ES}(p,k)$    using polynomials of degree at most one.
\end{proof}



%

\subsection{Example in a prime or prime power case}
For constructing the Euler Square of index $3,2$, we consider the field $F_{3}=\mathbb{Z}_{3}=\{0,1,2\}.$ Then, the set 
$D^{1}_{3}=\{P^{1}_{ij}: i,j=0,1,2\}$ consists of all polynomials of degree at most one over $\mathbb{Z}_{3}.$ Note  that $|D^{1}_{3}|=9.$ Fix $S_{2}=(1,2)$ as the ordered $2-$tuple. Evaluating every polynomial of $D^{1}_{3}$  at every point of $S_{2},$ we get the set: \\ 
$S^{1}_{2}=\{(0,0),(1,2),(2,1); (1,1),(2,0),(0,2);(2,2),(0,1),  (1,0)\}\subseteq \mathbb{Z}^{2}_{3}.$ 
Now it is easy to check that $S^{1}_{2}$ forms an Euler square of index  $3,2.$ 

\subsection{Euler Square for composite order}
Once $\text{ES}(p,k)$ is obtained for $p$ being a  prime or prime power, one can follow the composition rule described in \cite{euler_1922} to obtain $\text{ES}(n,k)$ for composite 
$n = 2^{r}p^{r_{1}}_{1}p^{r_{2}}_{2}\ldots, p^{r_{l}}_{l}$  and  $k \leq \min \{2^{r}, p^{r_{1}}_{1}, p^{r_{2}}_{2}, \ldots, p^{r_{l}}_{l}\} - 1$, where $p_{1},p_{2},\ldots, p_l$ are 
distinct odd primes. 

\section{Generalized Euler Square}
In this section, we propose a generalization of Euler Squares.
\begin{definition}{Generalized Euler Square (GES)}: \\
	A GES of index $n,k,t$, denoted GES($n,k,t$),  with $n>k>t$, is a hyper-rectangle of $n^{t+1}$ $k-$ads 
	of numbers, $(a_{i_{1}i_{2}\dots i_{t+1}1}, \dots , a_{i_{1}i_{2}\dots i_{t+1}k} )$, 
	where $a_{i_{1}i_{2}\dots 
		i_{t+1}r} \in \{0,1,\dots , n-1\};$ $ r=1,2,\dots, k;$ $1\leq i_{s}\leq n$ for $s=1,\cdots, t+1;$   $a_{i_{1}\dots 
		i_{j-1}ui_{j+1}\dots i_{t+1}r}\neq a_{i_{1}\dots i_{j-1}vi_{j+1}\dots i_{t+1}r}$ for $u\neq v$ and for 
	$i_{x_{1}}\neq j_{x_{1}}, i_{x_{2}}\neq j_{x_{2}}; 1 \leq x_{1} \neq x_{2} \leq t+1,$ $(a_{i_{1}i_{2}\dots 
		i_{t+1}r_{1}}, \dots ,a_{i_{1}i_{2}\dots i_{t+1}r_{t+1}}) \neq (a_{j_{1}j_{2}\dots j_{t+1}r_{1}}, \dots 
	,a_{j_{1}j_{2}\dots j_{t+1}r_{t+1}})$, where $1 \leq r_l \leq k$, $1 \leq l \leq k+1$.
\end{definition}

\begin{remark}
	It is easy to check that an Euler Square of index $n,k$ is a GES of index $n,k,1.$
\end{remark} 

\noindent 
In line with Harris F. MacNeish's construction \cite{euler_1922} for Euler Square, we construct the Generalized Euler Squares for the following cases:
	\begin{itemize}
		\item Index $p,p-1,t$, where $p$ is a prime number
		\item Index $p^r, p^{r}-1,t$, for $p$ prime
		\item Index $n,k,t$, where $n = 2^{r}p^{r_{1}}_{1}p^{r_{2}}_{2}\ldots, p^{r_{l}}_{l}$ for distinct odd primes $p_{1},p_{2},\ldots, p_l$. Here, $k + 1$ equals the least of the numbers $2^{r},p^{r_{1}}_{1},p^{r_{2}}_{2},\ldots ,p^{r_{l}}_{l}.$
	\end{itemize}
In the next section, we use higher degree polynomials over finite field for constructing GES.

\section{Construction of Generalized Euler Squares}
\label{sec:GES}
One of the main objectives of proposing Generalized Euler Square is to obtain a binary matrix possessing a larger number of columns.
However, this comes at the cost of increased
number of intersections between 
the $k-$tuples. Therefore, in GES we allow more than one intersections in order to produce large number of $k-$tuples.
The intersection between any two distinct $k-$tuples is the number of common roots between the two corresponding polynomials.
The use of polynomials of higher degree allows for more intersections between the $k-$tuples. 


\subsection{Construction of GES of index $p,k,t$ where $p$ is a prime or prime power}

Consider the polynomials of degree at most $t$ over a finite field $\mathbb{F}_{p}=\{f_{0}=0,f_{1},\dots,f_{p-1}\}$ of order 
$p.$ Form an ordered $k-$tuple $S_k=(f_{1},....,f_{k}).$ Let us denote the set of 
polynomials of degree at most $t$ as $D_{t}.$ 
As there are $p^{t+1}$ number of polynomials of degree at most 
$t$, we can write $D_{t}=\{P^{t}_{i_{1}i_{2}\dots i_{t+1}}=\sum^{t+1}_{j=1}f_{i_{j}}x^{j-1}:f_{i_{j}}\in 
\mathbb{F}_{p},1 \leq i_{j}\leq p\}.$
Evaluating a polynomial $P^{t}_{i_{1}i_{2}\dots i_{r+1}}$ at 
every point of $S_{k},$ we form an ordered $k-$tuple $P^{t}_{i_{1}i_{2}\dots i_{t+1}}(S_{k})=
(P^{t}_{i_{1}i_{2}\dots i_{t+1}}(f_{1}),\dots,P^{t}_{i_{1}i_{2}\dots i_{t+1}}(f_{k}))\in \mathbb{F}^{k}_{p}.$ Let 
$S^{t}_{k}=\{P^{t}_{i_{1}i_{2}\dots i_{t+1}}(S_{k}):P^{t}_{i_{1}i_{2}\dots i_{t+1}}\in 
D^{p}_{t}\}\subseteq \mathbb{F}^{k}_{p}.$ Now $|S^{t}_{k}|=p^{t+1}.$ Similar to the 
Euler Square case, in order to make $P^{t}_{i_{1}i_{2}\dots i_{r+1}}({S_{k}})$ a $k-$tuple of numbers on 
$\{0,\cdots,p-1\},$ we replace $f_{i}$ with its index
$i$. It will be shown next that the $k$-tuples in the set $S^{t}_{k}$ form a GES($n,k,t$).

For $u\neq v,$ $P^{t}_{i_{1}\dots i_{j-1} u i_{j+1} \dots i_{t+1}}$ and
$P^{t}_{i_{1}\dots i_{j-1} v i_{j+1} \dots i_{t+1}}$ can have either no common root
when $j=1$ or $0 \in \mathbb{F}_p$ as its common root 
when $1 < j \leq t+1$. This shows 
that, $P^{t}_{i_{1}\dots i_{j-1} u i_{j+1} \dots i_{t+1}}(S_{k})$ and
$P^{t}_{i_{1}\dots i_{j-1} v i_{j+1} \dots i_{t+1}}(S_{k})$ have no intersection as
$0\notin {S_{k}}$. For $i_{x_{1}}\neq j_{x_{1}}, i_{x_{2}}\neq j_{x_{2}}; 1\leq x_{1}\neq x_{2} \leq t+1$, 
$P^{t}_{i_{1}i_{2}\dots i_{t+1}}$ and $P^{t}_{j_{1}j_{2}\dots j_{t+1}}$ have at most $t$
number of common roots. This proves that $S^{t}_k$ forms GES($n,k,t$).

%




\section{Construction of GES for composite order}
In this section, we construct GES for composite dimensions. 
We now describe a procedure to combine two GES to produce a new GES. The resulting GES 
can have more general row sizes which are different from prime or prime powers. 

Let $p'$ and $p''$ be two primes or prime powers. Then, with $1 \leq t<k<p''\leq p',$ we can obtain 
one GES of index $p',k,t$ and another  GES of index $p'',k,t$ from the previous construction. 
Let $\{(c'_{i_{1},i_{2}, \ldots ,i_{t+1},1}, c'_{i_{1},i_{2}, \ldots ,i_{t+1},2}\ldots,c'_{i_{1},i_{2}, \ldots ,i_{t+1},k})\}^{p'}_{i_{s}=1}$ denote the GES$(p',k,t)$ and 
$\{(c''_{j_{1},j_{2}, \ldots ,j_{t+1},1}, c''_{j_{1},j_{2}, \ldots ,j_{t+1},2}\ldots,c''_{j_{1},j_{2}, \ldots ,j_{t+1},k})\}^{p''}_{j_{s}=1}$ denote GES$(p'',k,t).$ Let us define that
$$c_{i_{1}+p'(j_{1}-1),i_{2}+p'(j_{2}-1), \ldots ,i_{t+1}+p'(j_{t+1}-1),r}=c'_{i_{1},i_{2}, \ldots 
	,i_{t+1},r}+p'(c''_{j_{1},j_{2}, \ldots ,j_{t+1},r}-1).$$ Take $m_{s}=i_{s}+p'(j_{s}-1).$ It is clear that 
\begin{enumerate}
	\item $1\leq 
	m_{s}\leq p'p''$ as $1\leq i_{s}\leq p'$ and $1\leq j_{s}\leq p''$ 
	
	\item $0\leq c_{m_{1},m_{2}, \ldots 
		,m_{t+1},r}\leq p'p''-1$ as $0\leq c'_{i_{1},i_{2}, \ldots ,i_{t+1},r}\leq p'-1$ and $0\leq c''_{j_{1},j_{2}, \ldots 
		,j_{t+1},r}\leq p''-1.$ 
\end{enumerate}
It is now shown that the $k$-ads consisting of elements
$c_{m_{1},m_{2}, \ldots ,m_{t+1},r}$ for $r = 1,2,\cdots, k$ for $m_{i}=1,2,\cdots,p^\prime p^{\prime\prime}$,  
$i = 1,2,\cdots,t+1$ form GES$(p'p'',k,t)$.
\begin{enumerate}
	\item 
	$m_s^a \neq m_s^b$ implies that $i_s^a+p'(j_s^a-1)\neq i_s^b+p'(j_s^b-1)$. This 
	can happen for 
	$i_s^a\neq i_s^b$
	or $j_s^a\neq j_s^b$ or both. 
	By definition, for $i_s^a\neq i_s^b,$ $c'_{i_{1},i_{2}, \ldots ,i_{s-1},i_s^a,i_{s+1},\ldots, 
		i_{t+1},r}\neq c'_{i_{1},i_{2}, \ldots ,i_{s-1},i_s^b,i_{s+1},\ldots, i_{t+1},r}$ and for $j_s^a\neq 
	j_s^b,$ $c''_{j_{1},j_{2}, \ldots ,j_{s-1},j_s^a,j_{s+1},\ldots, j_{t+1},r}\neq c''_{j_{1},j_{2}, \ldots 
		,j_{s-1},j_s^b,i_{j+1},\ldots, j_{t+1},r}$. Also note that $|c'_{i_{1},i_{2}, \ldots ,i_{s-1},i_s^a,i_{s+1},\ldots, 
		i_{t+1},r} - c'_{i_{1},i_{2}, \ldots ,i_{s-1},i_s^b,i_{s+1},\ldots, i_{t+1},r}| < p'$. Now, if we have $m_s^a\neq m_s^b,$  $c_{m_{1},m_{2}, 
		\ldots ,m_{s-1},m_s^a,m_{s+1},\ldots, m_{t+1},r}\neq c_{m_{1},m_{2}, \ldots ,m_{s-1},m_s^b,m_{s+1},\ldots, 
		m_{t+1},r}.$ 
	\item Suppose that $m_s^a\neq m_s^b$ and $m_{\tilde{s}}^a\neq m_{\tilde{s}}^b.$ Now $m_s^a\neq 
	m_s^b$ can happen for $i_s^a\neq 
	i_s^b$ or $j_s^a\neq j_s^b$ or for both whereas $m_{\tilde{s}}^a\neq m_{\tilde{s}}^b$ can occur if 
	$i_{\tilde{s}}^a\neq i_{\tilde{s}}^b$  or 
	$j_{\tilde{s}}^a\neq j_{\tilde{s}}^b$ or both. Hence $m_s^a\neq m_s^b$ and $m_{\tilde{s}}^a\neq 
	m_{\tilde{s}}^b$ can happen for nine possible pairs of combinations which can be formed by taking one case from 
	occurrence of $m_s^a\neq m_s^b$ and another case from the occurrence of $m_{\tilde{s}}^a\neq 
	m_{\tilde{s}}^b.$  For $i_s^a\neq i_s^b$ or $i_{\tilde{s}}^a\neq 
	i_{\tilde{s}}^b$ or both,  
	\begin{equation*}
	\begin{split}
	& (c'_{i_{1}^a,\ldots,i_s^a-1,i_s^a,i_s^a+1,\ldots, 
		i_{\tilde{s}}^a-1,i_{\tilde{s}}^a,i_{\tilde{s}}^a+1,\ldots,i_{t+1}^a,r_{z}})^{t+1}_{z=1}   \\ 
	& \quad \quad \quad \quad \quad \quad \quad \quad  \neq (c'_{i_{1}^b,\ldots,i_s^b-1,i_s^b,i_s^b+1,\ldots, 
		i_{\tilde{s}}^b-1,i_{\tilde{s}}^b,i_{\tilde{s}}^b+1,\ldots,i_{t+1}^b,r_{z}})^{t+1}_{z=1}.
	\end{split}
	\end{equation*}
	
	Similarly for $j_s^a\neq j_s^b$ or $j_{\tilde{s}}^a\neq j_{\tilde{s}}^b$ or both, 
	\begin{equation*}
	\begin{split}
	& (c''_{j_{1}^a,\ldots,j_s^a-1,j_s^a,j_s^a+1,\ldots, 
		j_{\tilde{s}}^b-1,j_{\tilde{s}}^b,j_{\tilde{s}}^b+1,\ldots,j_{t+1}^a,r_{z}})^{t+1}_{z=1} \\
	& \quad \quad \quad \quad \quad \quad \quad \quad \neq (c''_{j_{1}^b,\ldots,j_s^b-1,j_s^b,j_s^b+1,\ldots, j_{\tilde{s}}^b-1,j_{\tilde{s}}^b,j_{\tilde{s}}^b+1,\ldots,j_{t+1}^b,r_{z}})^{t+1}_{z=1}.
	\end{split}
	\end{equation*}
	Hence,
	\begin{equation*}
	\begin{split}
	& (c_{m_{1}^a,\ldots,m_s^a-1,m_s^a,m_s^a+1,\ldots, 
		m_{\tilde{s}}^a-1,m_{\tilde{s}}^a,m_{\tilde{s}}^a+1,\ldots,m_{t+1}^a,r_{z}})^{t+1}_{z=1} \\
	& \quad \quad \quad \quad \quad \quad \quad \quad \neq  (c_{m_{1}^b,\ldots,m_s^b-1,m_s^b,m_s^b+1,\ldots, 
		m_{\tilde{s}}^b-1,m_{\tilde{s}}^b,m_{\tilde{s}}^b+1,\ldots,m_{t+1}^b,r_{z}})^{t+1}_{z=1}.
	\end{split}
	\end{equation*}
\end{enumerate}

\subsection{GES for general order}
\noindent A natural extension of the above composition can be stated as follows:
\begin{theorem}
\label{property_ges}
	Suppose $n = 2^{r}p^{r_{1}}_{1}p^{r_{2}}_{2}\ldots, p^{r_{l}}_{l}$ for distinct odd primes $p_{1},p_{2},\ldots, p_l$ 
	and $t<k<\min\{2^{r},p^{r_{1}}_{1},p^{r_{2}}_{2},\ldots ,p^{r_{l}}_{l}\},$ then GES($n,k,t$) exists. 

\end{theorem}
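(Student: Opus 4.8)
The plan is to mimic MacNeish's composition rule for Euler Squares (already recalled in the excerpt) and to bootstrap the binary composition of two GES into a composition of several. Write $n = 2^{r}p_{1}^{r_{1}}\cdots p_{l}^{r_{l}}$; by standard finite-field theory each of $2^{r}, p_{1}^{r_{1}}, \dots, p_{l}^{r_{l}}$ is a prime power, so the single-prime-power construction of Section~\ref{sec:GES} yields $\mathrm{GES}(2^{r},k,t)$, $\mathrm{GES}(p_{1}^{r_{1}},k,t)$, \dots, $\mathrm{GES}(p_{l}^{r_{l}},k,t)$, each available because $t<k<\min\{2^{r},p_{1}^{r_{1}},\dots,p_{l}^{r_{l}}\}$ guarantees $k<$ each individual modulus (which is exactly the degree constraint each single-factor construction needs). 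So the first step is to record that the hypotheses make all $l+1$ ``atomic'' GES exist with the \emph{common} parameters $k$ and $t$.

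Next I would set up an induction on the number of prime-power factors. The base case is one factor, handled directly by Section~\ref{sec:GES}. For the inductive step, suppose $\mathrm{GES}(N,k,t)$ and $\mathrm{GES}(q,k,t)$ exist, where $N$ is a product of prime powers and $q$ is the next prime power; then the composition rule proved in the previous section (the ``$c_{m_1,\dots,m_{t+1},r} = c'_{i_1,\dots,i_{t+1},r} + p'(c''_{j_1,\dots,j_{t+1},r}-1)$'' construction) produces $\mathrm{GES}(Nq,k,t)$, provided the rule is valid whenever both ingredients share the same $k$ and $t$ — which it is, since nowhere in that argument was it used that the two row-sizes were prime powers rather than one of them being composite; the only things used were the two defining ``no-repeat'' and ``at-most-one-coincidence-pattern'' properties of each ingredient GES and the mixed-radix bijection $m_s = i_s + p'(j_s-1)$. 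I would note explicitly that the digit-expansion argument remains sound with $p'$ replaced by $N$: the bound $|c' - c'| < N$ on differences within $N$-range digits, the bijectivity of $(i_s,j_s)\mapsto m_s$, and the propagation of ``$\neq$'' from either coordinate all go through verbatim. Iterating from $2^{r}$ through $p_{1}^{r_{1}}, \dots, p_{l}^{r_{l}}$ then yields $\mathrm{GES}(n,k,t)$.

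The one point that genuinely needs care — and which I would flag as the main obstacle — is re-verifying that the binary composition lemma of the preceding section does not secretly use primality of \emph{both} factors. In that section the statement was phrased for ``$p'$ and $p''$ two primes or prime powers,'' but the proof only manipulated the abstract GES axioms and the arithmetic of the base-$p'$ representation; so I would carefully restate it as: \emph{if $\mathrm{GES}(a,k,t)$ and $\mathrm{GES}(b,k,t)$ exist, then $\mathrm{GES}(ab,k,t)$ exists}, and point to the earlier proof for the verification, observing that ``prime or prime power'' was used there only to \emph{produce} the ingredients, not to \emph{combine} them. With that observation the theorem is a one-line induction. A secondary (routine) point is confirming the parameter inequality: at each composition stage the new GES still has degree $t$ and tuple-length $k$, and the condition $t<k$ is preserved, while $k$ being strictly below every atomic modulus is exactly $k < \min\{2^{r},p_{1}^{r_{1}},\dots,p_{l}^{r_{l}}\}$, the hypothesis. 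So no constraint degrades along the induction, and the conclusion $\mathrm{GES}(n,k,t)$ exists follows.
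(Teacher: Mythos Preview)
Your proposal is correct and matches the paper's approach: the paper presents the binary composition rule for two GES of index $p',k,t$ and $p'',k,t$ in the section immediately preceding this theorem, and then states the theorem as ``a natural extension of the above composition,'' without writing out a formal proof. You have simply made the implicit iteration explicit via induction on the number of prime-power factors, and your observation that the composition argument uses only the GES axioms and base-$p'$ arithmetic (not primality of the factors) is exactly the point that legitimizes the inductive step.
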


\noindent Similar to the Euler Square case (Lemma \ref{lem:e}), we have the following result:
\begin{lemma}
	\label{lem:gen}
	Let $t< k' < k$. Then the existence of GES($n,k,t$) implies that GES($n,k',t$) exists.
\end{lemma}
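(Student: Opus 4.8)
The plan is to mimic the proof of Lemma~\ref{lem:e} for ordinary Euler Squares: show that a GES($n,k,t$) can be truncated to a GES($n,k',t$) simply by discarding the last $k-k'$ coordinates of every $k$-ad. First I would fix a GES($n,k,t$), write its $k$-ads as $(a_{i_1\dots i_{t+1}1},\dots,a_{i_1\dots i_{t+1}k})$, and define the candidate GES($n,k',t$) to be the same hyper-rectangle of indices $(i_1,\dots,i_{t+1})$, now carrying the truncated $k'$-ads $(a_{i_1\dots i_{t+1}1},\dots,a_{i_1\dots i_{t+1}k'})$. Since $t<k'<k$, the inequality $n>k'>t$ needed in the definition of GES is inherited, and each truncated entry still lies in $\{0,1,\dots,n-1\}$, so the first part of the definition holds trivially.

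Next I would verify the two structural conditions. The ``single-coordinate injectivity'' condition, $a_{i_1\dots i_{j-1}u i_{j+1}\dots i_{t+1}r}\neq a_{i_1\dots i_{j-1}v i_{j+1}\dots i_{t+1}r}$ for $u\neq v$, is a statement that must hold for every $r$ in the relevant range; it held for all $r\in\{1,\dots,k\}$ in the original GES, so in particular it holds for all $r\in\{1,\dots,k'\}$ — we are merely asking it of fewer indices $r$. The more delicate ``$(t+1)$-tuple distinctness'' condition needs a small argument: in GES($n,k,t$) we know that whenever $i_{x_1}\neq j_{x_1}$ and $i_{x_2}\neq j_{x_2}$ for two distinct coordinates, the length-$(t+1)$ vectors $(a_{i_1\dots i_{t+1}r_1},\dots,a_{i_1\dots i_{t+1}r_{t+1}})$ and $(a_{j_1\dots j_{t+1}r_1},\dots,a_{j_1\dots j_{t+1}r_{t+1}})$ differ, for every choice of $r_1,\dots,r_{t+1}$ in $\{1,\dots,k\}$. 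In the truncated array we only need this for $r_1,\dots,r_{t+1}\in\{1,\dots,k'\}$, which is a subset of $\{1,\dots,k\}$; hence the property is directly inherited. Thus $S^{t}_{k'}$, the truncation, satisfies every clause of the GES definition.

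The main (minor) obstacle is purely bookkeeping: one must be careful that the restriction $t<k'$ is genuinely needed and used — if $k'\le t$ the $(t+1)$-tuple condition would require choosing $t+1$ indices from a set of size $k'<t+1$, which is impossible, so the definition of GES($n,k',t$) would be vacuous or ill-posed; the hypothesis $t<k'$ is exactly what rules this out. Alternatively, and perhaps more cleanly, I could invoke the construction of Section~\ref{sec:GES}: the GES($p,k,t$) built there from polynomials of degree at most $t$ evaluated on $S_k=(f_1,\dots,f_k)$ restricts, upon evaluating instead on $S_{k'}=(f_1,\dots,f_{k'})$, to exactly the GES($p,k',t$) produced by the same construction with the smaller tuple; and the composition rule of the previous section then propagates this to composite $n$. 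Either route works, but the direct truncation argument is shortest and does not require $n$ to have any special form, so that is the one I would write up.
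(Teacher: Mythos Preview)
Your proposal is correct and is precisely the argument the paper has in mind: the paper does not spell out a proof of Lemma~\ref{lem:gen} at all, merely stating that it is ``similar to the Euler Square case (Lemma~\ref{lem:e}),'' which is exactly the truncation argument you give. Your observation that both defining conditions of a GES are quantified over $r$ (respectively over the $r_l$) ranging in $\{1,\dots,k\}$, and hence are automatically inherited when one restricts to $\{1,\dots,k'\}$, is the whole content of the proof; the bookkeeping remark about needing $t<k'$ so that the $(t+1)$-tuple condition is not vacuous is a nice touch the paper omits.
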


\section{Construction of binary matrices via GES}
In this section, a construction of CS matrices from GES($n,k,t$) is proposed. Let us represent $n^{t+1}$ number of $k-$tuples as $\{(t^{j}_{1}, \ldots, t^{j}_{k}): j=1,\ldots,n^{r+1}\},$ obtained from GES($n,k,t$). Note that, $0 \leq t^{j}_{i}\leq n-1,$ for all $j=1,\ldots,n^{r+1}$ and $i=1,\dots, k.$  
From a $k-$tuple $(t^{j}_{1}, \ldots, t^{j}_{k}),$ we form a binary vector $ v^{j}$ of length $nk$ such that
$$v^{j}(i)=\begin{cases} 
1, & \text{if} \, i= (l-1)n+t^{j}_{l}+1, 1\leq l \leq k \\
0, &  \text{elsewhere}
\end{cases}$$
 Using $n^{t+1}$ number of $k-$tuples, $n^{t+1}$  binary vectors, each of length $nk$, are obtained. By taking the binary vectors as columns, we obtain a binary matrix $\Phi(n,k,t)$ of size $nk \times n^{t+1}.$ We treat  $\Phi(n,k,t)$ as the GES matrix of index $n,k,t.$ 

\subsection{Properties of GES matrix}
\begin{enumerate}

\item 
$\Phi(n,k,t)$ has $k$ number of blocks with each block size being $n.$ 
\item Each column of $\Phi(n,k,t)$ has exactly $k$ number of ones and contains a single $1$ in each block.
\item As intersection between any two distinct $k-$tuples of GES($n,k,t$) is at most $t,$ the non-zero overlap between any two distinct columns of $\Phi(n,k,t)$ is at most $t.$
\end{enumerate}

\begin{lemma}
\label{thm:lem}
The coherence $\mu_{\Phi(n,k,r)}$ of $\Phi(n,k,r)$ is at most equal to $\frac{r}{k}$.
\end{lemma}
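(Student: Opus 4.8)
The plan is to read off the coherence directly from the combinatorial structure of the columns of $\Phi(n,k,r)$ that was set up just above the statement, using only elementary counting; there is no real obstacle here, the work is in making the bookkeeping precise. First I would recall that each column of $\Phi(n,k,r)$ is one of the binary vectors $v^{j}$ of length $nk$ associated with a $k$-tuple $(t^{j}_{1},\ldots,t^{j}_{k})$ of $\mathrm{GES}(n,k,r)$, and that $v^{j}$ has exactly one nonzero entry in each of the $k$ consecutive blocks of length $n$, namely a $1$ in position $(l-1)n + t^{j}_{l}+1$ of block $l$. In particular $\|v^{j}\|_{2}^{2}=k$, so every column has the same norm $\sqrt{k}$, and the denominator in the definition of coherence is $\|v^{j}\|_{2}\|v^{j'}\|_{2}=k$ for every pair.

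Next I would compute the inner product $\langle v^{j}, v^{j'}\rangle$ for $j\neq j'$. Because the $k$ blocks occupy pairwise disjoint coordinate ranges, the inner product splits as a sum of contributions from the individual blocks; and within block $l$ the only candidate for a common nonzero coordinate is position $(l-1)n+t^{j}_{l}+1$ versus $(l-1)n+t^{j'}_{l}+1$, which coincide (and then contribute $1$) if and only if $t^{j}_{l}=t^{j'}_{l}$. Hence
\[
\langle v^{j}, v^{j'}\rangle \;=\; \bigl|\{\, l\in\{1,\ldots,k\}\,:\, t^{j}_{l}=t^{j'}_{l}\,\}\bigr|,
\]
which is exactly the intersection number of the two $k$-tuples. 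This is where I would invoke Property~3 of the GES matrix (equivalently, the defining property of $\mathrm{GES}(n,k,r)$ that any two distinct $k$-tuples have at most $r$ common entries), giving $0\le \langle v^{j}, v^{j'}\rangle \le r$ for all $j\neq j'$.

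Finally I would combine the two computations: since all columns are nonnegative the inner products are nonnegative, so
\[
\mu_{\Phi(n,k,r)} \;=\; \max_{j\neq j'} \frac{|\langle v^{j}, v^{j'}\rangle|}{\|v^{j}\|_{2}\,\|v^{j'}\|_{2}} \;\le\; \frac{r}{k},
\]
which is the claim. The only point requiring any care is the block-disjointness argument showing that the inner product literally counts agreements of the tuples (no spurious overlaps across blocks, and at most one overlap inside a block); once that is stated cleanly the bound is immediate, and no further estimates are needed.
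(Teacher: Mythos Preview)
Your proof is correct and follows essentially the same approach as the paper: both compute the column norm as $\sqrt{k}$ from the $k$ ones per column, bound the inner product of distinct columns by the intersection number of the corresponding $k$-tuples (at most $r$), and divide. Your version simply spells out the block-disjointness bookkeeping that the paper leaves implicit in the phrase ``non-zero overlap between any two distinct columns is at most $r$''.
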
 
\begin{proof}
Each column of $\Phi(n,k,r)$ has exactly $k$ number of ones, which implies $\ell_{2}-$norm of each column is $\sqrt{k}.$ Also the non-zero overlap between any two distinct columns of $\Phi(n,k,r)$ is at most $r.$ So, the absolute value of the inner product between any two distinct columns is at most $r.$ Hence the coherence $\mu_{\Phi(n,k,r)}$ of $\Phi(n,k,r)$ is at most $\frac{r}{k}.$
\end{proof}

\begin{remark} 
The maximum possible column size of any binary matrix is $\frac{{m \choose t+1}}{{k \choose t+1}}$ \cite{amini_2011}, where $m$ is the row size, $k$ is the number of ones in each column and $t$ is the maximum overlap between any two columns. If $m=nk,$ which is the case for $\Phi(n,k,t),$ and $t$ is fixed, then the maximum possible column size is 

\begin{equation}\label{maxcol}
\frac{{nk \choose t+1}}{{k \choose t+1}} = \Theta(n^{t+1}), 
\end{equation}
where $a= \Theta(b)$ implies that, there exist two constants $c_{1}, c_{2}$ such that $c_{1}b \leq a \leq c_{2}b$.
Hence the column size of $\Phi(n,k,t)$ is in the maximum possible order.
\end{remark}
\noindent From lemma \ref{thm:lem} and Proposition \ref{thm:pro}, it follows that the matrix $\Phi(n,k,t)$ so constructed satisfies RIP.
\begin{theorem}
\label{thm:main}
The matrix $\Phi_{0} = \frac{1}{\sqrt{k}}\Phi(n,k,t)$ satisfies RIP with $\delta_{k'} = \frac{t(k'-1)}{k}$ for any $k' < \frac{k}{t} + 1$. 
\end{theorem}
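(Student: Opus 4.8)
The plan is to derive Theorem~\ref{thm:main} directly from Lemma~\ref{thm:lem} together with Proposition~\ref{thm:pro}, simply by tracking constants. First I would invoke Lemma~\ref{thm:lem} to obtain that the coherence of $\Phi(n,k,t)$ is at most $\frac{t}{k}$. Next, observe that normalizing the columns does not change the coherence: since every column of $\Phi(n,k,t)$ has exactly $k$ ones, each column has $\ell_2$-norm $\sqrt{k}$, so $\Phi_0 = \frac{1}{\sqrt{k}}\Phi(n,k,t)$ has unit-norm columns and $\mu_{\Phi_0} = \mu_{\Phi(n,k,t)} \le \frac{t}{k}$. In other words, $\Phi_0$ is exactly the column-normalized version of the binary GES matrix, so it is the object to which Proposition~\ref{thm:pro} applies.

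Then I would apply Proposition~\ref{thm:pro} with $\mu_{\Phi_0} \le \frac{t}{k}$: it gives that $\Phi_0$ satisfies RIP of order $k'$ with constant $\delta_{k'} = (k'-1)\mu_{\Phi_0} \le \frac{t(k'-1)}{k}$. This is precisely the claimed expression for $\delta_{k'}$. The only remaining point is to verify that this is a genuine RIP statement, i.e.\ that $0 < \delta_{k'} < 1$, which is where the hypothesis $k' < \frac{k}{t}+1$ enters: this inequality is equivalent to $t(k'-1) < k$, hence to $\frac{t(k'-1)}{k} < 1$, so the RIP constant lies in the admissible range $(0,1)$ required by the definition in~\eqref{eqn:rip}. (Strictly, one should also note $k' \ge 2$ so that $\delta_{k'} > 0$; for $k'=1$ the RIP is trivial.)

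There is essentially no obstacle here — the statement is a corollary of the two cited results once the normalization bookkeeping is done, and the role of the hypothesis $k' < \frac{k}{t}+1$ is only to keep $\delta_{k'}$ below $1$. The one thing I would be slightly careful about is making explicit that Proposition~\ref{thm:pro} is stated for matrices with unit-norm columns, which is exactly why the factor $\frac{1}{\sqrt{k}}$ appears in the definition of $\Phi_0$; without it the inner products would be bounded by $t$ rather than by $\frac{t}{k}$, and the RIP bound would fail. So the proof is: normalize, quote Lemma~\ref{thm:lem}, quote Proposition~\ref{thm:pro}, and check the range of $k'$.
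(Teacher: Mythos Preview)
Your proposal is correct and matches the paper's approach exactly: the paper states the theorem as an immediate consequence of Lemma~\ref{thm:lem} and Proposition~\ref{thm:pro}, and you have spelled out precisely those details (normalization to unit-norm columns, the coherence bound $\mu \le t/k$, and the range restriction on $k'$ to keep $\delta_{k'}<1$).
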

%
 



\section{GES as rectangular array}

\begin{definition}(Rectangular Array:) A rectangular array is a two dimensional
  array of $k$-tuples with the following properties:
\begin{itemize}
\item (GES 1) : each array entry is a $k-$tuple of numbers obtained from $\{0,\dots, n-1\}$, 
	\item (GES 2) : two distinct $k-$tuples from the same column do not intersect, 
	\item (GES 3) : two distinct $k-$tuples from same row have at most $t-1$ intersections,
	\item (GES 4) : any two distinct $k-$tuples in the array can have at most $t$ intersections.
\end{itemize}  
\end{definition}

\begin{theorem}
\label{es_square_prime}
For $p$ being a prime or prime power, ES($p,p-1$) is a $p\times p$ square matrix 
of $(p-1)-$tuple such that 
\begin{itemize}
	\item each entry is a $(p-1)-$tuple of numbers obtained from $\{0,\dots, p-1\}$,
	\item there is no intersection between any two $(p-1)-$tuples on the same row and same column,
	\item there is exactly one intersection between any two distinct $(p-1)-$tuples which are not from the same row or column.
\end{itemize}
\end{theorem}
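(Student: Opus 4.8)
The plan is to show that the three listed properties of $\mathrm{ES}(p,p-1)$ follow directly from the finite-field construction of Section~\ref{sec:new_construction}, by re-examining the proof of the earlier theorem with the sharper bound $k=p-1$ in place. Recall that in that construction each array entry at position $(i,j)$ is the $(p-1)$-tuple $P^{1}_{ij}(S_{p-1})=(P^{1}_{ij}(f_{1}),\dots,P^{1}_{ij}(f_{p-1}))$, where $P^{1}_{ij}=f_{i}x+f_{j}$ ranges over the $p^{2}$ affine polynomials over $\mathbb{F}_{p}$ and $S_{p-1}=(f_{1},\dots,f_{p-1})=\mathbb{F}_{p}\setminus\{0\}$. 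The first bullet is immediate: each coordinate $P^{1}_{ij}(f_{q})$ lies in $\mathbb{F}_{p}$, which after the index relabeling is $\{0,\dots,p-1\}$.

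First I would verify the ``no intersection on a row or column'' property. Two distinct tuples in the same row share the first index $i$ but differ in $j$, so the corresponding polynomials $f_{i}x+f_{j}$ and $f_{i}x+f_{j'}$ with $j\neq j'$ have no common root at all (their difference is the nonzero constant $f_{j}-f_{j'}$), hence agree in no coordinate. Two distinct tuples in the same column share $j$ but differ in $i$; then $f_{i}x+f_{j}$ and $f_{i'}x+f_{j}$ with $i\neq i'$ have their unique common root at $x=0=f_{0}$, which is excluded from $S_{p-1}$, so again they agree in no evaluated coordinate. This gives the second bullet.

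The key new point, and the one requiring the hypothesis $k=p-1$ rather than merely $k\le p-1$, is the third bullet: \emph{exactly} one intersection for tuples not sharing a row or column. For $i\neq i'$ and $j\neq j'$, the polynomials $f_{i}x+f_{j}$ and $f_{i'}x+f_{j'}$ are affine with distinct leading coefficients, so they have exactly one common root $x^{*}=(f_{j'}-f_{j})(f_{i}-f_{i'})^{-1}\in\mathbb{F}_{p}$. The ``at most one'' direction was already shown; the ``at least one'' direction is where I expect the main (though modest) obstacle: I must argue that this common root $x^{*}$ actually lies in the evaluation set $S_{p-1}$, i.e.\ that $x^{*}\neq 0$. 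But $x^{*}=0$ would force $f_{j'}=f_{j}$, contradicting $j\neq j'$; hence $x^{*}\in\mathbb{F}_{p}\setminus\{0\}=S_{p-1}$, so the two tuples agree in exactly the coordinate indexed by $x^{*}$. (It is precisely because $S_{p-1}$ exhausts all of $\mathbb{F}_{p}\setminus\{0\}$ when $k=p-1$ that ``at most one'' upgrades to ``exactly one''.) Combining the three verifications, the $p\times p$ array of $(p-1)$-tuples built from the affine polynomials over $\mathbb{F}_{p}$ has all the asserted properties, which completes the proof.
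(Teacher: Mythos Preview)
Your proof is correct and follows essentially the same approach as the paper: both arguments construct the $(p-1)$-tuples by evaluating the $p^{2}$ affine polynomials over $\mathbb{F}_{p}$ at the nonzero field elements and then handle the three cases (same row, same column, neither) by counting common roots. The only cosmetic difference is that your row/column indexing follows the convention of Section~\ref{sec:new_construction} (row $=$ leading coefficient, column $=$ constant term), whereas the paper's proof of this theorem transposes that convention; since the stated properties are symmetric in rows and columns this is immaterial, and in fact your explicit verification that the unique common root $x^{*}$ is nonzero (because $j\neq j'$) is a detail the paper's proof asserts without spelling out.
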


\begin{proof}

Consider the finite field
$\mathbb{F}_{p}=\{f_{0}=0,f_{1},\dots,f_{p-1}\}$
where $p$ is a prime or a prime power.
Let $S^{p}$ be the collection of polynomials of degree at most
$1$, with  zero being the constant term. It is easy to
check that the cardinality
of $S^{p}$ is $|S^{p}|=p.$  For $P\in S^{p},$ define the set
$S^{p}_{P}=\{P_{j}=P+f_{j}:j=0, \dots, p-1\}$. Fix any ordered $(p-1)-$tuple
$z\in \mathbb{F}_p^(p-1)$ with $k = p-1$. For simplicity, we consider
$z=(f_{1}, \dots, f_{p-1})$.
An ordered $(p-1)-$tuple is formed
after evaluating $P_{j}$ at each of the points of $z$, that is, $d^{P}_{j} := \big(P_{j}(f_{1}), \cdots, P_{j}(f_{p-1})\big)$.
In order to make $d^{P}_{j}$ a $(p-1)-$tuple of numbers on 
$\{0,\cdots,p-1\},$ we replace $f_{i}$ with its index 
$i$.  Now $d^{P}_{j}$ for $j=0, \dots,p-1$ forms one column and as $|S^{p}|=p,$ there are $p$ such columns. Therefore we get a matrix of  size $p\times p$ with each entry being $(p-1)-$tuples. 

Let us take two $(p-1)-$tuples from same column. As they belong to same column the corresponding polynomials are of the form $P+f_{i}$ and $P+f_{j}$ for $i\neq j$ and $P$ being a polynomial of degree at most $1$, with  zero being the constant term. Since  $P+f_{i}$ and $P+f_{j}$ does not share any common root, there is no intersection between the tuples coming from same column. 

Let us take two $(p-1)-$tuples from same row. As they belong to same row the corresponding polynomials are of the form $P_{1}+f_{i}$ and $P_{2}+f_{i}$ for $P_{1}$ and $P_{2}$ being a polynomial of degree at most $1$ with  zero being the constant term. Since  $P_{1}+f_{i}$ and $P_{2}+f_{i}$ share zero as their only common root, there is no intersection between the tuples coming from same row as the polynomials are not evaluated at zero while forming the tuples.
Let us take two $(p-1)-$tuples from different row and column. As they belong to different row and different column  the corresponding polynomials are of the form $P_{1}+f_{i}$ and $P_{2}+f_{j}$ for $i\neq j$ with $P_{1}$ and $P_{2}$ being polynomials of degree at most $1$ and  zero being the constant term. Since  $P_{1}+f_{i}$ and $P_{1}+f_{j}$ share exactly one nonzero common root, there is exactly one intersection between the tuples coming from different row and different column.

\end{proof}

\begin{example}

Let us first take $\mathbb{F}_{3}=\mathbb{Z}_{3}=\{0,1,2\}$ and fix the ordered $5-$tuple $\mathbb{Z}^{2}_{3}=\{1,2\}.$
 Now the set of polynomials of degree at most one with constant term zero is $S^{3}=\{0,x,2x\}.$ Let us look at the arrangement of the polynomials in the construction of GES($3,2,2$).
\begin{displaymath}
\begin{bmatrix}
0 & x & 2x  \\
1 & x + 1 & 2x + 1 \\
2 & x + 2 & 2x + 2 
\end{bmatrix}.
\end{displaymath}
\noindent After evaluating the polynomials at $\mathbb{Z}^{2}_{3},$ we get GES($3,2,1$) as a $3\times 3$ square matrix with entries being $2-$tuples:
\begin{displaymath}
\begin{bmatrix}
(0,0) & (1,2) & (2,1)  \\
(1,1) & (2,0) & (0,2) \\
(2,2) & (0,1) & (1,0) 
\end{bmatrix}.
\end{displaymath}
\noindent One may observe that GES($3,2,1$) satisfies the properties of theorem~\ref{es_square_prime}.
\end{example}
\begin{remark}
Note that, the third condition of theorem~\ref{es_square_prime} is stronger than (ES 3) given in Section~\ref{ES_block_sparse}. Later, we use this property while calculating block coherence of binary matrices coming Euler Square.
\end{remark}

\subsection{GES($p,k,t$) as a rectangular  array}
\label{prime_rectangle_array}

Consider a two dimensional matrix where the rows correspond to the $p$ different values of the dimension (corresponding to the degree $t$ in the polynomials) and the columns correspond to particular values for the other $t$ dimensions ($0,1,2,\cdots,t-1$). The entries of each column are the $k$-tuples obtained by evaluating the $p$ number of polynomials 
in the dimension $t$. The matrix obtained is of size $p\times p^{t}$, with each entry being a $k-$tuple, and satisfies (GES 1),(GES 2), (GES 3) and(GES 4).

Consider the finite field
$\mathbb{F}_{p}=\{f_{0}=0,f_{1},\dots,f_{p-1}\}$
where $p$ is a prime or a prime power.
Let $S^{p}$ be the collection of polynomials of degree at most
$t$ (where $t < p-1$), with zero being the constant term. It is easy to
check that the cardinality
of $S^{p}$ is $|S^{p}|=p^{t}.$  For $P\in S^{p},$ define the set
$S^{p}_{P}=\{P_{j}=P+f_{j}:j=0, \dots, p-1\}$. Fix any ordered $k-$tuple
$z\in \mathbb{F}_p^k$ with $t<k\leq p-1$. For simplicity, we consider
$z=(f_{1}, \dots, f_{k})$.
An ordered $k-$tuple is formed
after evaluating $P_{j}$ at each of the points of $z$, that is, $d^{P}_{j} := \big(P_{j}(f_{1}), \cdots, P_{j}(f_{k})\big)$.
In order to make $d^{P}_{j}$ a $k-$tuple of numbers on 
$\{0,\cdots,p-1\},$ we replace $f_{i}$ with its index 
$i$.  Now $d^{P}_{j}$ for $j=0, \dots,p-1$ forms one column and as $|S^{p}|=p^{t}$ there are $p^{t}$ such columns. Therefore we get a matrix of  size $p\times p^{t}$ with each entry being a $k-$tuple.

Let us take two $k-$tuples from same column. As they belong to same column, the corresponding polynomials are of the form $P+f_{i}$ and $P+f_{j}$ for $i\neq j$ and $P$ being a polynomial of degree at most $t$, with  zero being the constant term. Since  $P+f_{i}$ and $P+f_{j}$ does not share any common root, there is no intersection between the tuples coming from same column. 

Let us take two $k-$tuples from same row. As they belong to same row the corresponding polynomials are of the form $P_{1}+f_{i}$ and $P_{2}+f_{i}$ for $P_{1}$ and $P_{2}$ being a polynomial of degree at most $t$ with  zero being the constant term. Since  $P_{1}+f_{i}$ and $P_{2}+f_{i}$ share at most $t-1$ non zero common roots and $0$ as a common root, there is at most $t-1$ intersection between the tuples coming from same row as the polynomials are not evaluated at zero while forming the tuples.

Let us take two $k-$tuples from different row and column. As they belong to different row and column  the corresponding polynomials are of the form $P_{1}+f_{i}$ and $P_{2}+f_{j}$ for $i\neq j$ and $P_{1}$ and $P_{2}$ being a polynomial of degree at most
$r$ with  zero being the constant term. Since  $P_{1}+f_{i}$ and $P_{1}+f_{j}$ share at most $r$ non zero common root, there is at most $r$ intersection between the tuples coming from different row and column.

\noindent {\bf Example:} 
Let us first take $\mathbb{F}_{5}=\mathbb{Z}_{5}=\{0,1,2,3,4\}$ and fix the ordered $5-$tuple $\mathbb{Z}^{2}_{3}=\{1,2,3,4\}.$
Let us look at the arrangement of the polynomials in the construction of GES($5,4,2$): 
\begin{displaymath}
\begin{bmatrix}
0 & x & 2x & \dots & 4x^{2}+3x & 4x^{2}+4x \\
1 & x+1 & 2x+1 & \dots & 4x^{2}+3x+1 & 4x^{2}+4x+1\\
2 & x+2 & 2x+2 & \dots &  4x^{2}+3x +2& 4x^{2}+4x+2\\
3 & x+3 & 2x+3 & \dots & 4x^{2}+3x+3 & 4x^{2}+4x+3 \\
4 & x+4 & 2x+4 & \dots & 4x^{2}+3x+4 & 4x^{2}+4x+4 \\
\end{bmatrix}.
\end{displaymath}
Now, evaluating polynomials at $\mathbb{Z}^{2}_{3}=\{1,2,3,4\},$ we obtain GES($5,4,2$):  
\begin{displaymath} 
 \begin{bmatrix}
(0,0,0,0) & (1,2,3,4) & (1,4,2,3) & \dots &  (2,2,0,1) &  (3,4,3,0) \\
(1,1,1,1) & (2,3,4,0) & (2,0,3,4) & \dots & (3,3,1,2) & (4,0,4,1)\\
(2,2,2,2) & (3,4,0,1) & (3,1,4,0) & \dots &  (4,4,2,3)& (0,1,0,2)\\
(3,3,3,3) & (4,0,1,2) & (4,2,0,1) & \dots & (0,0,3,4) & (1,2,1,3) \\
(4,4,4,4) & (0,1,2,3) & (0,3,1,2) & \dots & (1,1,4,0) & (2,3,2,4) \\
\end{bmatrix}.~
\end{displaymath}
\subsection{GES($p'p'', k,t$) as rectangular array}
	As in prime or prime power cases, one can arrange 
	$$\{(c_{m_{1},m_{2}, \ldots ,m_{t+1},1}, c_{m_{1},m_{2}, \ldots 
		,m_{t+1},2}\ldots,c_{m_{1},m_{2}, \ldots ,m_{t+1},k})\}^{p'p''}_{m_{s}=1}$$ as a rectangular matrix of size $p'p''\times 
	(p'p'')^{t}$ which satisfies the same properties given before. 
 We provide an equivalent form of GES($p'p'', k,t$) as a rectangular matrix of size $p'p''\times (p'p'')^{t}$ in the following way:  
 \par Following the construction in  subsection~\ref{prime_rectangle_array}, let $d^{P}_{i}$, for $i=1, \dots,(p')^{t}$,  form $i^{\text{th}}$ column of GES($p',k,t$) and $d^{Q}_{j}$, for $j=1, \dots,(p'')^{t}$, form the $j^{\text{th}}$ column of GES($p'',k,t$). Now a column $d^{P,Q}_{i,j}$ is formed by
	$$d^{P,Q}_{i,j}=d^{P}_{i}+(d^{Q}_{j}-\mathbf{1_{k}})p', $$ where $\mathbf{1_{k}}$ is a $k-$tuple of all ones. Since $d^{P}_{i}$ has $p'$ number of $k-$tuples and $d^{Q}_{j}$ has $p''$ number of $k-$tuples, one gets $p'p''$ such $k-$tuples in a column and each entry of $d^{P,Q}_{i,j}$ lies in $\{0,1, \dots, p'p''-1\}$ as each entry of $d^{P}_{i}$ lies in $\{0,1, \dots, p'-1\}$ and each entry of $d^{Q}_{j}$ lies in $\{0,1, \dots, p''-1\}.$  For every possible combination of $i$ and $j,$ we get $(p'p'')^{t}$ columns. Therefore, we can obtain a rectangle $D^{P,Q}$ of size $p'p'' \times (p'p'')^{t}$ with entry being a $k-$tuple and each entry of the $k-$tuples lies between $0$ and $p'p''-1.$ For $1\leq \ell \leq k,$ let us denote the $\ell^{\text{th}}$ entry of $s^{\text{th}}$ $k-$tuple of $d^{P}_{i},$ $\ell^{\text{th}}$ entry of $n^{\text{th}}$ $k-$tuple of $d^{Q}_{j}$ and $\ell^{\text{th}}$ entry of $m^{\text{th}}$ $k-$tuple of $d^{P,Q}_{i,j}$ as $d^{P}_{i}(s,\ell),$ $d^{Q}_{j}(n,\ell)$ and $d^{P,Q}_{i,j}(m,\ell)$, respectively, where $1\leq s \leq p',$ $1\leq n \leq p''$ and  $1\leq m \leq p'p''.$ Let $m_{1}\neq m_{2},$ then we get, $d^{P,Q}_{i,j}(m_{1},\ell)\neq d^{P,Q}_{i,j}(m_{2},\ell).$ It follows from the fact that $d^{P}_{i}(s_{1},\ell) \neq d^{P}_{i}(s_{2},\ell)$ and $d^{P}_{j}(n_{1},\ell) \neq d^{Q}_{j}(n_{2},\ell)$ for $s_{1}\neq s_{2}$ and $n_{1}\neq n_{2}.$ Hence $D^{P,Q}$ satisfies (GES 2). Using properties of GES$(p',k,t)$ and GES$(p'',k,t)$ and from construction procedure, it is easy to check that $D^{P,Q}$ satisfies (GES 3) and (GES 4) too. Hence, $D^{P,Q}$ forms a GES($p'p'',k,t$).

\begin{theorem}
\label{property_ges}
	Suppose $n = 2^{r}p^{r_{1}}_{1}p^{r_{2}}_{2}\ldots, p^{r_{l}}_{l}$ for distinct odd primes $p_{1},p_{2},\ldots, p_l$ 
	and $t<k<\min\{2^{r},p^{r_{1}}_{1},p^{r_{2}}_{2},\ldots ,p^{r_{l}}_{l}\}$. Then GES($n,k,t$) exists, which can be represented as an $n \times n^{t}$ matrix with each entry being a $k-$tuple of numbers taken from $\{1,2,\dots,n\}$ and has the following properties:
	\begin{itemize}
	\item Two distinct $k-$tuples from the same column do not intersect.
	\item Two distinct $k-$tuples from same row have at most $t-1$ intersections.
	\item Any two distinct $k-$tuples in the array can have at most $t$ intersections.
\end{itemize}

\end{theorem}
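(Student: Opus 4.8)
The plan is to argue by induction on the number of prime-power factors in the factorization $n = 2^{r} p_1^{r_1} \cdots p_l^{r_l}$. The base case is a single prime power: if $q$ is a prime or prime power with $t < k < q$, then the construction of Subsection~\ref{prime_rectangle_array} already produces GES($q,k,t$) as a $q \times q^{t}$ array of $k$-tuples over $\{0,\dots,q-1\}$ satisfying (GES 2), (GES 3) and (GES 4); a trivial relabelling $f_i \mapsto i$ puts the entries in $\{1,\dots,q\}$ as stated. Since $t<k<\min\{2^{r},p_1^{r_1},\dots,p_l^{r_l}\}$ by hypothesis, \emph{every} prime-power factor $q$ of $n$ satisfies $t<k<q$, so the base case applies to each of them.

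For the inductive step I would write $n = n' q$, where $q$ is one of the prime-power factors and $n'$ is the product of the remaining ones. By the inductive hypothesis GES($n',k,t$) exists as an $n' \times (n')^{t}$ rectangular array with entries in $\{0,\dots,n'-1\}$ enjoying (GES 2)--(GES 4), and by the base case GES($q,k,t$) exists as a $q \times q^{t}$ rectangular array with the same properties. I then invoke the composition procedure given for GES($p'p'',k,t$) as a rectangular array with $p' := n'$ and $p'' := q$: setting $d^{P,Q}_{i,j} = d^{P}_{i} + (d^{Q}_{j} - \mathbf{1_{k}})\,n'$ for each pair of columns yields an $n'q \times (n'q)^{t}$ array whose entries lie in $\{0,\dots,n'q-1\}$. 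That construction uses only that the two ingredient arrays are rectangular-array GES's with entries in the stated ranges and does not actually require $n'$ to be prime, so it applies verbatim here.

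The substantive step is to confirm that the composed array again satisfies (GES 2)--(GES 4). The key observation is that if two composed $k$-tuples agree in coordinate $\ell$, then because $0\le d^{P}_{\cdot}(\cdot,\ell)\le n'-1$ the two ``digits'' must agree separately, so the $P$-parts agree in coordinate $\ell$ and the $Q$-parts agree in coordinate $\ell$; hence the set of agreement coordinates of a pair of composed tuples is the intersection of the agreement sets of the corresponding pairs of ingredient tuples. From this: two composed tuples in one column of $D^{P,Q}$ force a disagreement in the same-column pair of GES($n',k,t$) or of GES($q,k,t$), giving no intersection, which is (GES 2); two composed tuples in one row force a same-row pair in one of the ingredients, giving at most $t-1$ intersections, which is (GES 3); and two arbitrary distinct composed tuples must have distinct $P$-parts or distinct $Q$-parts (otherwise they would coincide), each of which bounds the intersection by $t$, which is (GES 4). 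Here one uses that the polynomial construction produces pairwise distinct $k$-tuples, since two polynomials of degree $\le t$ agreeing at the $k\ge t+1$ evaluation points coincide. Carrying the induction through all prime-power factors of $n$ yields GES($n,k,t$) as the desired $n\times n^{t}$ rectangular array, and the final relabelling of entries into $\{1,\dots,n\}$ is cosmetic.

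I expect the only real obstacle to be the bookkeeping in the inductive step---keeping straight which index of $D^{P,Q}$ plays the role of ``row'' versus ``column'' and checking that this structure is inherited correctly from both ingredients---but the agreement-set identity above reduces each of the three verifications to a property already established for the ingredient GES's, so no genuinely new estimate is needed.
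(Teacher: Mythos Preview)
Your proposal is correct and follows essentially the same route as the paper: the paper establishes the prime-power case in Subsection~\ref{prime_rectangle_array}, gives the composition $d^{P,Q}_{i,j}=d^{P}_{i}+(d^{Q}_{j}-\mathbf{1_{k}})p'$ for two factors, and then states the theorem as the evident consequence; you have simply made the induction on the number of prime-power factors explicit and spelled out the ``digit'' argument that the agreement set of a composed pair is the intersection of the two ingredient agreement sets, which the paper leaves to the reader. Your observation that the composition step does not require $n'$ itself to be a prime power is exactly what is needed to iterate.
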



\section{Recovery guarantees for block sparse signals via GES}
In this section, making use of the properties of GES, we show that the binary matrices constructed from GES are capable of recovering block sparse signals.

\subsection{Block orthogonality}

\begin{theorem}
\label{ges_block_sparse}
If GES$(n,k,t)$ exists, then a sparse matrix of size $nk \times n^{t+1}$ exists, which consists of $n^{t}$ orthonormal blocks, each of size $nk\times n$.
\end{theorem}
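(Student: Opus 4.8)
The plan is to mirror the proof of Theorem~\ref{es_block_sparse}, using the rectangular-array representation of GES established in Theorem~\ref{property_ges} as the organizing principle for how to group the columns of $\Phi(n,k,t)$ into blocks. Recall from Section~8 that $\Phi(n,k,t)$ is the $nk\times n^{t+1}$ binary matrix whose columns are the binary encodings $v^{j}$ of the $n^{t+1}$ $k$-tuples of GES$(n,k,t)$, where each $v^{j}$ has exactly one $1$ in each of its $k$ blocks of length $n$. The key structural fact I would invoke is that, by Theorem~\ref{property_ges}, the $n^{t+1}$ $k$-tuples of GES$(n,k,t)$ can be laid out as an $n\times n^{t}$ rectangular array in which \emph{two distinct $k$-tuples from the same column do not intersect} (property (GES 2)). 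This gives a natural partition of the $n^{t+1}$ tuples into $n^{t}$ groups of $n$ tuples each — one group per column of the array — with the property that tuples within a group are pairwise disjoint in the intersection sense.

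The steps, in order, would be: (i) fix the $n\times n^{t}$ rectangular representation of GES$(n,k,t)$ from Theorem~\ref{property_ges}; (ii) for $\ell=1,\dots,n^{t}$, define the $\ell$-th block $\Phi(n,k,t)[\ell]$ to consist of the $n$ columns of $\Phi(n,k,t)$ corresponding to the $n$ $k$-tuples in the $\ell$-th column of the array; (iii) observe that for two distinct tuples in the same array column, their intersection number is $0$, so the two corresponding binary columns $v^{i}, v^{j}$ never have a $1$ in a common position, hence $\langle v^{i}, v^{j}\rangle = 0$; (iv) since each column of $\Phi(n,k,t)$ has squared $\ell_2$-norm equal to $k$, the matrix $\frac{1}{\sqrt{k}}\Phi(n,k,t)[\ell]$ has orthonormal columns, i.e.\ $\big(\frac{1}{\sqrt{k}}\Phi(n,k,t)[\ell]\big)^{T}\big(\frac{1}{\sqrt{k}}\Phi(n,k,t)[\ell]\big) = I_{n\times n}$; (v) conclude that $\Phi_{0} = \frac{1}{\sqrt{k}}\Phi(n,k,t)$ is a matrix of size $nk\times n^{t+1}$ consisting of $n^{t}$ orthonormal blocks, each of size $nk\times n$.

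The only real obstacle is step~(i)–(iii): one must be sure that the rectangular representation actually partitions \emph{all} $n^{t+1}$ tuples into exactly $n^{t}$ columns of exactly $n$ tuples, and that (GES 2) is the right property to read off orthonormality within a block. This is exactly what Theorem~\ref{property_ges} guarantees (and, in the prime/prime-power case, what the construction in Subsection~\ref{prime_rectangle_array} shows explicitly: the $n$ polynomials $P_{0},\dots,P_{p-1}=P+f_{0},\dots,P+f_{p-1}$ forming one column differ only in their constant terms, so they share no root and the evaluated tuples are pairwise disjoint). Everything else is the same norm-counting bookkeeping as in Theorem~\ref{es_block_sparse}, and indeed Theorem~\ref{es_block_sparse} is the special case $t=1$. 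I would therefore keep the write-up short, citing Theorem~\ref{property_ges} for the array structure and Section~8 for the column encoding, and spell out only the inner-product computation that yields orthogonality within each block.
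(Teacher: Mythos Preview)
Your proposal is correct and matches the paper's own proof essentially step for step: the paper likewise forms the $\ell$-th block from the $n$ columns of $\Phi(n,k,t)$ corresponding to the $\ell$-th column of the rectangular array, invokes Theorem~\ref{property_ges} (property (GES~2)) to conclude that distinct $k$-tuples in the same column have zero intersection, and then normalizes by $\frac{1}{\sqrt{k}}$ to obtain $n^{t}$ orthonormal blocks of size $nk\times n$. Your write-up is slightly more explicit about the inner-product computation and the partition count, but the argument is the same.
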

\begin{proof}
The binary matrix $\Phi(n,k,t)$ of size $nk \times n^{t+1}$ is obtained from GES($n,k,t$). Every column of $\Phi(n,k,t)$ corresponds to a unique $k-$tuple of GES($n,k,t$). We arrange the columns of $\Phi(n,k,t)$ to form a block orthogonal matrix. We form the $\ell-$th block (of block size $nk \times n$) by taking $n$ columns of $\Phi(n,k,t)$ corresponding to $n$ $k-$tuples coming from $\ell-$th column of GES($n,k,t$).  From Theorem~\ref{property_ges}, we know that two $k-$tuples belonging to same column of a GES do not have any intersection. As a result, the inner product between any two different columns within a block is zero, implying thereby that each block is orthogonal. Consequently, the block matrix  $\frac{1}{\sqrt{k}}\Phi(n,k,t)$ has $n^{t}$ orthonormal blocks, where each block is of size $nk\times n.$ 
\end{proof}


In the case of generalized Euler square, the conditions given in Theorem~\ref{thm:bomp} for the successful recovery of block sparse signal of block size $d$ via BOMP makes sense provided the block coherence of $\frac{1}{\sqrt{k}}\Phi(n,k,t)$ is strictly less than $\frac{1}{d}.$ In view of this, our next objective is to choose  $n,k$ and $d$ such that the block coherence of $\frac{1}{\sqrt{k}}\Phi(n,k,t)$ becomes strictly less than $\frac{1}{d}.$ For simplicity,  we first establish the block coherence of Euler square matrices.

\subsection{Block coherence of Euler Square}

\begin{theorem}
\label{es_block_coherence}
Suppose $p$ is a prime or a power of prime, $d\leq p-1$ and $d$ divides $p.$ Then a binary matrix of size $p(p-1) \times p^{2}$ with block coherence $\frac{1}{p-1}$ exists, which consists of $\frac{p^{2}}{d}$ number of orthonormal blocks, each of size $p(p-1) \times d$. 
\end{theorem}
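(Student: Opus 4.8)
The plan is to reuse the binary matrix $\Phi^{p}_{p-1}$ of size $p(p-1)\times p^{2}$ built from $\mathrm{ES}(p,p-1)$ (which exists since $p$ is a prime or prime power), and to equip it with a \emph{finer} block partition than the one used in Theorem~\ref{es_block_sparse}. Recall that every column of $\Phi^{p}_{p-1}$ is a $0/1$ vector with exactly $p-1$ ones, so after scaling by $\frac{1}{\sqrt{p-1}}$ all columns have unit norm, and the unnormalized inner product of two distinct columns equals the intersection number of the corresponding $(p-1)$-tuples. By Theorem~\ref{es_square_prime} this intersection number is $0$ when the two tuples lie in the same row or the same column of $\mathrm{ES}(p,p-1)$, and is \emph{exactly} $1$ when they lie in different rows and different columns. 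This exact-one-intersection property, available precisely because $p$ is a prime or prime power, is what will force the block coherence to attain the value $\frac{1}{p-1}$ rather than something strictly smaller.

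Since $d\mid p$, I would fix a partition of the $p$ rows of $\mathrm{ES}(p,p-1)$ into $p/d$ groups $G_{1},\dots,G_{p/d}$, each of size $d$, and use the same partition inside every column of the Euler square. For each column index $c\in\{1,\dots,p\}$ and each group index $s\in\{1,\dots,p/d\}$, let the block $\Phi[c,s]$ consist of the $d$ columns of $\frac{1}{\sqrt{p-1}}\Phi^{p}_{p-1}$ corresponding to the $d$ $(p-1)$-tuples that sit in column $c$ and rows $G_{s}$; this refines each size-$p$ orthonormal block of Theorem~\ref{es_block_sparse} into $p/d$ sub-blocks. One obtains $p\cdot(p/d)=p^{2}/d$ blocks, each of size $p(p-1)\times d$. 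Because all $d$ tuples of a single block lie in one column of the Euler square, they pairwise do not intersect, so the $d$ columns of each block are orthonormal, i.e. $\Phi^{T}[c,s]\Phi[c,s]=I_{d}$.

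It then remains to compute $\mu_{B}$. The upper bound $\mu_{B}\le\mu_{\Phi^{p}_{p-1}}\le\frac{1}{p-1}$ is immediate from $0\le\mu_{B_{\Phi}}\le\mu_{\Phi}$ together with the coherence estimate of \cite{ram_2016}. For the matching lower bound I would inspect a pair of blocks $\Phi[c_{1},s_{1}],\Phi[c_{2},s_{2}]$ with $c_{1}\neq c_{2}$ and $s_{1}\neq s_{2}$ (such a pair exists because $p\ge 2$ and, since $d\mid p$ with $d\le p-1$, there are at least two row groups). For any tuple of the first block (in some row of $G_{s_{1}}$) and any tuple of the second block (in some row of $G_{s_{2}}$), the two rows differ and the two columns differ, so their intersection is exactly $1$; hence every entry of $M=\Phi^{T}[c_{1},s_{1}]\Phi[c_{2},s_{2}]$ equals $\frac{1}{p-1}$, i.e. $M=\frac{1}{p-1}J_{d}$ with $J_{d}$ the $d\times d$ all-ones matrix. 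Thus $\lambda_{\max}(M^{T}M)=\frac{d^{2}}{(p-1)^{2}}$, so $\frac{1}{d}\lambda_{\max}^{1/2}(M^{T}M)=\frac{1}{p-1}$, giving $\mu_{B}\ge\frac{1}{p-1}$ and therefore $\mu_{B}=\frac{1}{p-1}$. As a consistency check, the only other non-trivial block pairs are those with $c_{1}\neq c_{2}$, $s_{1}=s_{2}$, for which $M=\frac{1}{p-1}(J_{d}-I_{d})$ and the contribution is $\frac{d-1}{d(p-1)}<\frac{1}{p-1}$, while blocks from the same Euler-square column give $M=0$; so nothing exceeds $\frac{1}{p-1}$. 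The main obstacle is this lower-bound bookkeeping: one must arrange the block partition so that some block pair realizes the all-ones Gram block, and that is exactly where the strengthened incidence property of Theorem~\ref{es_square_prime} and the divisibility hypothesis $d\mid p$, $d\le p-1$, are needed.
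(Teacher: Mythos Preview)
Your proposal is correct and follows essentially the same approach as the paper: you subdivide the column-blocks of $\frac{1}{\sqrt{p-1}}\Phi^{p}_{p-1}$ into groups of $d$ rows of $\mathrm{ES}(p,p-1)$, use Theorem~\ref{es_square_prime} to identify the three possible Gram blocks $0$, $\frac{1}{p-1}(J_d-I_d)$, and $\frac{1}{p-1}J_d$, and read off $\mu_B=\frac{1}{p-1}$ from the all-ones case. The only cosmetic difference is that you obtain the upper bound via the general inequality $\mu_B\le\mu_\Phi$ and then exhibit a block pair attaining it, whereas the paper simply computes the maximal eigenvalue in each of the three cases and takes the maximum; the content is identical.
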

\begin{proof}
Recall that ES($n,k$) is same as GES($n,k,1$). Hence, from theorem~\ref{ges_block_sparse}, we get the binary matrix $\frac{1}{\sqrt{k}}\Phi(n,k,1)$ consisting of $\frac{n^{2}}{d}$ number of orthonormal blocks, where each block is of size $nk \times d.$  Now take, $k=p-1.$
Let $\frac{1}{\sqrt{k}}\Phi(n,k,1)[\ell]$ denote the $\ell^{th}$ block of $\frac{1}{\sqrt{k}}\Phi(n,k,1).$
 From the construction of Euler square described in Theorem~\ref{es_square_prime} 
 and the properties of Euler square, it follows, for $\ell \neq q,$ that

(i)
When $\Phi(n,k,1)[\ell]$ and $\Phi(n,k,1)[q]$ correspond to $k-$tuples coming from different columns but from the same rows of ES($n,k$), we have

\begin{equation*}
    (\frac{1}{\sqrt{k}}\Phi(n,k,1)[\ell])^{T}(\frac{1}{\sqrt{k}}\Phi(n,k,1)[q])= \frac{1}{k}\begin{pmatrix}
        0 & 1 & 1 & 1 &  \cdots & 1  \\
        1 & 0 & 1 & 1 & \cdots & 1 \\
        1 & 1 & 0 & 1 & \cdots & 1  \\
        \vdots & \vdots & \vdots & \ddots & \vdots & \vdots \\
        1 & 1 & 1 & \cdots & 1 & 0  
     \end{pmatrix}_{d\times d},
\end{equation*}
that is, all diagonal entries of $(\Phi(n,k,1)[\ell])^{T}\Phi(n,k,1)[q]$ are zero and all off-diagonal entries are one. 
The maximum eigen value of $(\frac{1}{\sqrt{k}}\Phi(n,k,1)[\ell])^{T}(\frac{1}{\sqrt{k}}\Phi(n,k,1)[q])$ is $\frac{d-1}{k}.$

ii) 
When $\Phi(n,k,1)[\ell]$ and $\Phi(n,k,1)[q]$ correspond to $k-$tuples coming from different columns and rows of ES($n,k$), we have

\begin{equation*}
    (\frac{1}{\sqrt{k}}\Phi(n,k,1)[\ell])^{T}(\frac{1}{\sqrt{k}}\Phi(n,k,1)[q])= \frac{1}{k}\begin{pmatrix}
        1 & 1 & 1 & 1 &  \cdots & 1  \\
        1 & 1 & 1 & 1 & \cdots & 1 \\
        1 & 1 & 1 & 1 & \cdots & 1  \\
        \vdots & \vdots & \vdots & \ddots & \vdots & \vdots \\
        1 & 1 & 1 & \cdots & 1 & 1 
     \end{pmatrix}_{d\times d},
\end{equation*}
that is,  $(\Phi(n,k,1)[\ell])^{T}\Phi(n,k,1)[q]$ is an all one matrix. 
The maximum eigen value of $(\frac{1}{\sqrt{k}}\Phi(n,k,1)[\ell])^{T}(\frac{1}{\sqrt{k}}\Phi(n,k,1)[q])$ is $\frac{d}{k}.$

(iii) 
When $\Phi(n,k,1)[\ell]$ and $\Phi(n,k,1)[q]$ correspond to $k-$tuples coming from the same column of ES($n,k$), we have
\begin{displaymath}
(\frac{1}{\sqrt{k}}\Phi(n,k,1)[\ell])^{T}(\frac{1}{\sqrt{k}}\Phi(n,k,1)[q])=\mathbf{0}.
\end{displaymath}
Here, $\mathbf{0}$ denotes the zero matrix.

\noindent Consequently, the block coherence $\mu_{B_{\frac{1}{\sqrt{k}}\Phi(n,k,1)}}$ is $\frac{d}{dk}=\frac{1}{k}.$  

\end{proof}

\begin{remark}
The block coherence of $\frac{1}{\sqrt{p-1}}\Phi(p,p-1,1)$ is at most $\frac{1}{p-1}$, which can also be obtained from the fact that the coherence of $\Phi(p,p-1,1)$ is at most $\frac{1}{p-1}.$ The significance of the Theorem~\ref{es_block_coherence} is that it uses the fact that there is exactly one intersection between any two distinct $(p-1)-$tuples which are not from the same row or column and proves that the block coherence of $\frac{1}{\sqrt{p-1}}\Phi(p,p-1,1)$ is exactly $\frac{1}{p-1}.$
\end{remark}

\begin{theorem}
\label{ES_Block_coherence_Gen}
Suppose, GES($n,k,1$) exists, then for $d< k$ and $n$ being a multiple of $d$, 
 a sparse matrix of size $nk \times n^{2}$ exists which consists of $\frac{n^{2}}{d}$ orthonormal blocks, each of size $nk\times d.$  Then the block coherence of $\frac{1}{\sqrt{k}}\Phi(n,k,1)$ is at most $\frac{1}{k}.$
\end{theorem}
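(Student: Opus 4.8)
The plan is to mirror the argument of Theorem~\ref{es_block_coherence}, replacing the prime-order Euler square there by an arbitrary GES($n,k,1$) (equivalently ES($n,k$)) and replacing the exact-intersection property used in that proof by the weaker ``at most one intersection'' property, which is all that is available in the general case. First I would invoke Theorem~\ref{ges_block_sparse} with $t=1$: since $d$ divides $n$, each of the $n$ columns of ES($n,k$) carries $n$ $k$-tuples, and I partition those $n$ tuples into $n/d$ groups of size $d$; taking, for each group, the $d$ columns of $\Phi(n,k,1)$ associated with it as a block yields $n\cdot(n/d)=n^{2}/d$ blocks, each of size $nk\times d$. Two distinct $k$-tuples lying in a common column of the Euler square do not intersect, so the corresponding columns of $\Phi(n,k,1)$ are orthogonal and of equal norm $\sqrt{k}$; hence after scaling by $1/\sqrt{k}$ each block is orthonormal, which establishes the first assertion.

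For the block coherence, fix two distinct blocks $\ell\neq r$ and write $M[\ell,r]=(\tfrac{1}{\sqrt{k}}\Phi(n,k,1)[\ell])^{T}(\tfrac{1}{\sqrt{k}}\Phi(n,k,1)[r])=\tfrac1k B$, where $B$ is the $d\times d$ matrix whose $(a,b)$ entry is the number of coordinates in which the $a$-th tuple of block $\ell$ and the $b$-th tuple of block $r$ agree. If both blocks come from the same column of the Euler square, then by (ES~2) every such intersection is $0$, so $B$ is the zero matrix; otherwise each relevant pair of tuples is distinct and not from a common column, so by (ES~2) and (ES~3) (see Section~\ref{ES_block_sparse}) every entry of $B$ lies in $\{0,1\}$. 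In either case $B$ is a $0/1$ matrix of order $d$, whence $\sigma_{\max}(B)\le\|B\|_{F}\le d$ (at most $d^{2}$ ones), or equivalently $\sigma_{\max}(B)\le\sqrt{\|B\|_{1}\,\|B\|_{\infty}}\le d$ via its row and column sums.

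Combining these facts, $\lambda_{\max}\!\big(M^{T}[\ell,r]M[\ell,r]\big)=\sigma_{\max}(M[\ell,r])^{2}=\tfrac{1}{k^{2}}\sigma_{\max}(B)^{2}\le\tfrac{d^{2}}{k^{2}}$ for every $\ell\neq r$, so the block coherence of $\tfrac{1}{\sqrt{k}}\Phi(n,k,1)$ equals $\tfrac1d\max_{\ell\neq r}\lambda_{\max}^{1/2}(M^{T}[\ell,r]M[\ell,r])\le\tfrac1d\cdot\tfrac dk=\tfrac1k$, as claimed.

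The one step that requires genuine care, rather than being a transcription of Theorem~\ref{es_block_coherence}, is the observation that a general ES($n,k$) only guarantees \emph{at most} one intersection between tuples from different rows and columns (not exactly one), so $B$ need no longer have the clean all-ones or zero-diagonal form that appeared in the prime-order case; this forces the use of the crude but uniform singular-value bound $\sigma_{\max}(B)\le d$, which is precisely why the conclusion is the inequality $\mu_{B}\le\frac1k$ rather than an identity. I do not anticipate any other obstacle: the partitioning of the Euler-square columns into blocks of $d$ tuples is the reason the divisibility hypothesis $d\mid n$ is imposed, and $d<k$ is needed only to ensure $\frac1k<\frac1d$, so that the block-OMP hypothesis of Theorem~\ref{thm:bomp} can subsequently be satisfied.
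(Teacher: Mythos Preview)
Your argument is correct. The paper's own proof, however, is a one-liner: it simply invokes Theorem~\ref{ges_block_sparse} (with $t=1$, subdividing the $n$ orthonormal blocks of size $nk\times n$ into $n/d$ sub-blocks each) together with the general inequality $\mu_{B_\Phi}\le\mu_\Phi$ stated right after the definition of block coherence, and the coherence bound $\mu_{\frac{1}{\sqrt{k}}\Phi(n,k,1)}\le\frac{1}{k}$ from Lemma~\ref{thm:lem}. Your explicit analysis of the inter-block Gram matrix $B$ as a $0/1$ matrix with $\sigma_{\max}(B)\le d$ is precisely what underlies the proof of $\mu_{B_\Phi}\le\mu_\Phi$ in this setting, so you have effectively re-derived that general inequality in the special case at hand rather than citing it. Both routes reach the same bound; the paper's is shorter, yours is more self-contained and makes transparent exactly where the ``at most one intersection'' property (versus the ``exactly one'' of the prime case) forces the conclusion to be an inequality rather than an equality.
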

\begin{proof}
The proof follows from the Theorem~\ref{ges_block_sparse} and the fact that the coherence of  $\frac{1}{\sqrt{k}}\Phi(n,k,1)$ is at most $\frac{1}{k}$.
\end{proof}

\noindent Now from Theorem~\ref{thm:bomp}, the following theorem follows immediately.
\begin{theorem}
\label{es_block_sparse_recovery}
A sufficient condition for the BOMP to recover a block $s-$sparse signal $x_{0}$, with each block length $ d$, from $y=\frac{1}{\sqrt{k}}\Phi(n,k,1) x_{0}$ is $$s<\frac{1}{2}\bigg(1+\frac{k}{d}\bigg),$$ provided $d\leq k.$ 
\end{theorem}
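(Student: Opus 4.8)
The plan is to invoke Theorem~\ref{thm:bomp} directly, since Theorem~\ref{ES_Block_coherence_Gen} has already supplied all the ingredients it requires. Recall that Theorem~\ref{thm:bomp} states that a sufficient condition for BOMP to recover a block $k$-sparse signal with block length $d$ from $y=\Phi x$ is $kd < \frac{\mu^{-1}_{B_\Phi} + d}{2}$, provided $\Phi$ is block orthogonal. So the task reduces to (i) verifying that $\Phi := \frac{1}{\sqrt{k}}\Phi(n,k,1)$ is block orthogonal with the block decomposition into blocks of length $d$, and (ii) plugging in the block coherence bound $\mu_{B_\Phi} \le \frac{1}{k}$.

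First I would note that by Theorem~\ref{ges_block_sparse} (with $t=1$), the matrix $\frac{1}{\sqrt{k}}\Phi(n,k,1)$ can be arranged so that it consists of $n^{2}/d$ orthonormal blocks, each of size $nk \times d$, under the hypothesis $d < k$ and $d \mid n$; this gives the block orthogonality $\Phi^T[\ell]\Phi[\ell] = I_{d\times d}$ needed by Theorem~\ref{thm:bomp}. Second, Theorem~\ref{ES_Block_coherence_Gen} gives $\mu_{B_\Phi} \le \frac{1}{k}$, hence $\mu^{-1}_{B_\Phi} \ge k$. Substituting into the BOMP condition $sd < \frac{\mu^{-1}_{B_\Phi} + d}{2}$, it suffices to require $sd < \frac{k + d}{2}$, i.e. $s < \frac{1}{2}\bigl(\frac{k}{d} + 1\bigr) = \frac{1}{2}\bigl(1 + \frac{k}{d}\bigr)$. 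This is exactly the claimed bound, and the side condition $d \le k$ matches what is needed for the block structure (indeed $d<k$ in Theorem~\ref{ES_Block_coherence_Gen}, and $d=k$ is the degenerate orthonormal case where the bound reads $s<1$, i.e. $s\le 0$ nonzero blocks, trivially handled or excluded).

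The only genuine subtlety — and the step I would be most careful about — is the direction of the inequality when passing from the block-coherence \emph{upper} bound $\mu_{B_\Phi}\le 1/k$ to a statement about $\mu^{-1}_{B_\Phi}$: a smaller block coherence makes the BOMP condition \emph{easier}, so the upper bound $\mu_{B_\Phi}\le 1/k$ correctly yields the sufficient lower bound $\mu^{-1}_{B_\Phi}\ge k$, and the recovery guarantee with $s < \tfrac12(1+k/d)$ is then a \emph{sufficient} condition (any $s$ satisfying it is recoverable). I should also make sure the block length $d$ used here is consistent with the one in Theorem~\ref{ES_Block_coherence_Gen}, namely that $n$ is a multiple of $d$ and $d<k$, so that the hypotheses chain together cleanly; the statement as written only records ``$d\le k$'', so in the write-up I would either add the divisibility assumption or note that it is inherited. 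Beyond these bookkeeping points the proof is a one-line substitution, which is why the statement says ``the following theorem follows immediately.''

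\begin{proof}
By Theorem~\ref{ges_block_sparse} applied with $t=1$, and by Theorem~\ref{ES_Block_coherence_Gen}, the matrix $\Phi := \frac{1}{\sqrt{k}}\Phi(n,k,1)$ (with $n$ a multiple of $d$ and $d\le k$) can be arranged as a block orthogonal matrix, i.e. $\Phi^{T}[\ell]\Phi[\ell]=I_{d\times d}$ for every block $\ell$, and its block coherence satisfies $\mu_{B_{\Phi}}\le \frac{1}{k}$, whence $\mu_{B_{\Phi}}^{-1}\ge k$. By Theorem~\ref{thm:bomp}, BOMP recovers any block $s$-sparse signal $x_{0}$ from $y=\Phi x_{0}$ whenever
\begin{displaymath}
sd<\frac{\mu_{B_{\Phi}}^{-1}+d}{2}.
\end{displaymath}
Since $\mu_{B_{\Phi}}^{-1}\ge k$, it suffices that $sd<\frac{k+d}{2}$, that is,
\begin{displaymath}
s<\frac{1}{2}\left(1+\frac{k}{d}\right),
\end{displaymath}
which proves the claim.
\end{proof}
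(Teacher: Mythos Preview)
Your proposal is correct and follows exactly the approach the paper intends: the paper states that the result ``follows immediately'' from Theorem~\ref{thm:bomp}, and you have spelled out precisely that substitution, using the block orthogonality and the block-coherence bound $\mu_{B_\Phi}\le 1/k$ from Theorem~\ref{ES_Block_coherence_Gen}. Your added care about the monotonicity of the BOMP condition in $\mu_{B_\Phi}$ and the inherited divisibility hypothesis $d\mid n$ is welcome bookkeeping that the paper leaves implicit.
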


\subsection{Block coherence of GES matrices}
Now we derive the expression for block coherence of binary matrices constructed from GES.
\begin{theorem}
Suppose, GES($n,k,t$) exists, then for 
$d\leq \lfloor \frac{k}{t} \rfloor$  
 and $n$ being a multiple of $d$, a sparse matrix of size $nk \times n^{t+1}$ exists which consists of $\frac{n^{t+1}}{d}$ orthonormal blocks, each of size $nk\times d.$ Then
the block coherence of $\frac{1}{\sqrt{k}}\Phi(p,p-1,t)$ is at most $\frac{t}{k}.$ 
\end{theorem}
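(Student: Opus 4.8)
The plan is to mimic the proof of Theorem~\ref{es_block_coherence}, replacing the ``at most one intersection'' property of Euler Squares by the ``at most $t$ intersections'' property of GES($n,k,t$) from Theorem~\ref{property_ges}. First I would invoke Theorem~\ref{ges_block_sparse} to get the block matrix $\frac{1}{\sqrt{k}}\Phi(n,k,t)$ with $n^{t+1}$ columns arranged into blocks of size $nk\times d$: since $n$ is a multiple of $d$, each of the $n^{t}$ orthonormal blocks of size $nk\times n$ (one per column of the rectangular array of Theorem~\ref{property_ges}) splits evenly into $n/d$ sub-blocks of size $nk\times d$, giving $\frac{n^{t+1}}{d}$ orthonormal blocks in total. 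Each such $d$-block corresponds to $d$ distinct $k$-tuples drawn from a single column of the $n\times n^{t}$ array.

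Next I would estimate, for two distinct blocks indexed by $\ell\neq q$, the quantity $\lambda_{\max}^{1/2}\big(M^{T}[\ell,q]M[\ell,q]\big)$ where $M[\ell,q]=\big(\frac{1}{\sqrt{k}}\Phi(n,k,t)[\ell]\big)^{T}\big(\frac{1}{\sqrt{k}}\Phi(n,k,t)[q]\big)$. The entries of $k\,M[\ell,q]$ are the pairwise intersection numbers between $k$-tuples: by (GES 2) the diagonal is $0$ when both blocks sit inside the same array-column and otherwise each entry is between $0$ and $t$. Hence every entry of $M[\ell,q]$ is at most $t/k$ in absolute value, and by the standard bound $\lambda_{\max}(A)\leq \max_{i}\sum_{j}|A_{ij}|$ for a $d\times d$ nonnegative matrix one gets $\lambda_{\max}(M^{T}[\ell,q]M[\ell,q])\leq (d\cdot t/k)^{2}$, so $\lambda_{\max}^{1/2}\leq dt/k$. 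Dividing by $d$ (per the definition of block coherence) yields $\mu_{B}\leq t/k$, which is the claim. I would note that when the two blocks come from the same array-column their cross-Gram is the zero matrix (as in case (iii) of Theorem~\ref{es_block_coherence}), so only the inter-column pairs matter, and the bound $\frac{t}{k}$ is what Lemma~\ref{thm:lem} already gives for ordinary coherence; the theorem is really a consistency statement that $\mu_{B_{\Phi}}\leq\mu_{\Phi}\leq t/k$, matching the remark after Theorem~\ref{es_block_coherence}.

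I would flag the apparent typo: the displayed conclusion writes $\Phi(p,p-1,t)$ but the hypotheses are stated for the general GES($n,k,t$); the intended statement is that the block coherence of $\frac{1}{\sqrt{k}}\Phi(n,k,t)$ is at most $\frac{t}{k}$, and I would prove that. I would also observe that the condition $d\leq\lfloor k/t\rfloor$ is not actually needed for the block-coherence bound itself (it is needed only for the downstream BOMP recovery guarantee via Theorem~\ref{thm:bomp}, where one wants $\mu_{B}^{-1}\geq k/t\geq d$); one may keep it in the statement for uniformity with the recovery corollary but it plays no role in the coherence estimate.

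The main obstacle is essentially bookkeeping rather than a deep step: one must be careful that the block-splitting argument (going from blocks of width $n$ to blocks of width $d$) preserves orthogonality of each $d$-block — which it does automatically, since a sub-matrix of an orthonormal block is itself orthonormal — and that the entrywise bound $|M[\ell,q]_{ij}|\le t/k$ is applied only to the $d\times d$ matrix $M[\ell,q]$, whose rows/columns are indexed by the $d$ chosen tuples. There is nothing subtle to verify beyond the intersection bound, which is supplied by Theorem~\ref{property_ges}, and the elementary eigenvalue inequality $\lambda_{\max}(A)\le \|A\|_{\infty}$ for nonnegative $A$ (equivalently, $\lambda_{\max}^{1/2}(M^{T}M)=\|M\|_{2}\le \sqrt{\|M\|_{1}\|M\|_{\infty}}\le d\,t/k$ since $M$ is $d\times d$ with entries $\le t/k$).
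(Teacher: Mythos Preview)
Your proposal is correct and lands on exactly the argument the paper uses: the paper's entire proof is the one-liner that the block-orthogonal structure comes from Theorem~\ref{ges_block_sparse} and the block-coherence bound follows from the ordinary-coherence bound $\mu_{\Phi}\le t/k$ (Lemma~\ref{thm:lem}) together with $\mu_{B_\Phi}\le\mu_\Phi$, which is precisely the ``consistency statement'' you already identified. Your direct eigenvalue computation is fine but unnecessary---the paper skips it entirely---and your observations about the typo $\Phi(p,p-1,t)$ and the irrelevance of $d\le\lfloor k/t\rfloor$ to the coherence bound are both accurate.
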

\begin{proof}
The proof follows from the Theorem~\ref{ges_block_sparse} and the fact that the coherence of  $\frac{1}{\sqrt{k}}\Phi(n,k,r)$ is at most $\frac{r}{k}$.
\end{proof}

\noindent Now from Theorem~\ref{thm:bomp}, the following theorem follows immediately.

\begin{theorem}
\label{Ges_block_sparse_recovery}
A sufficient condition for the BOMP to recover a block $s-$sparse signal $x_{0}$, with $nk \times d$ as size of each block, from $y=\frac{1}{\sqrt{k}}\Phi(n,k,r) x_{0}$ is $$s<\frac{1}{2}\bigg(1+\frac{k}{dr}\bigg),$$ provided $d\leq \lfloor \frac{k}{r} \rfloor.$
\end{theorem}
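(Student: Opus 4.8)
The plan is to obtain the claim as a direct corollary of the block-OMP recovery criterion in Theorem~\ref{thm:bomp}, feeding into it the structural facts already established for the matrix $\Phi_{0}:=\frac{1}{\sqrt{k}}\Phi(n,k,r)$. So the whole argument amounts to checking the two hypotheses of Theorem~\ref{thm:bomp} and then rearranging the resulting inequality.

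First I would assemble the two ingredients that Theorem~\ref{thm:bomp} requires. From Theorem~\ref{ges_block_sparse} (applied with degree parameter $r$ in place of $t$), under the standing hypotheses $d\le\lfloor k/r\rfloor$ and $d\mid n$, the columns of $\Phi(n,k,r)$ can be grouped so that $\Phi_{0}$ is block orthogonal, i.e. $\Phi_{0}^{T}[\ell]\Phi_{0}[\ell]=I_{d\times d}$ for every $\ell$: one just sub-partitions each width-$n$ orthonormal block produced by Theorem~\ref{ges_block_sparse} into $n/d$ width-$d$ orthonormal sub-blocks, which is legitimate precisely because $d\mid n$. From Lemma~\ref{thm:lem} the coherence of $\Phi_{0}$ is at most $r/k$, hence by the bound $\mu_{B}\le\mu$ its block coherence satisfies $\mu_{B_{\Phi_{0}}}\le r/k$ — this is exactly the conclusion of the theorem immediately preceding the present one, so I would simply cite it.

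Second I would exploit the monotonicity of the sufficient condition of Theorem~\ref{thm:bomp} in the quantity $\mu_{B_{\Phi_{0}}}^{-1}$. Since $\mu_{B_{\Phi_{0}}}\le r/k$ gives $\mu_{B_{\Phi_{0}}}^{-1}\ge k/r$, we have
\[
  \frac{\mu_{B_{\Phi_{0}}}^{-1}+d}{2}\ \ge\ \frac{k/r+d}{2},
\]
so every block sparsity level $s$ with $sd<\frac{k/r+d}{2}$ a fortiori satisfies $sd<\frac{\mu_{B_{\Phi_{0}}}^{-1}+d}{2}$, which is the hypothesis of Theorem~\ref{thm:bomp} for $\Phi=\Phi_{0}$ with block length $d$. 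Dividing through by $d$ rewrites $sd<\frac{k/r+d}{2}$ as $s<\frac{1}{2}\big(1+\frac{k}{dr}\big)$, the stated bound; the side condition $d\le\lfloor k/r\rfloor$ is what forces $r/k<1/d$ (equivalently $\mu_{B_{\Phi_{0}}}^{-1}>d$), so the criterion is non-vacuous, and Theorem~\ref{thm:bomp} then delivers exact BOMP recovery of any block $s$-sparse $x_{0}$.

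I do not expect a genuine obstacle: the argument is substitution into cited results. The only step needing a moment's attention is the relaxation in the third paragraph — verifying that replacing the true block coherence by its upper bound $r/k$ can only shrink the admissible range of $s$, so that the condition obtained is genuinely sufficient rather than merely equivalent to the sharp one.
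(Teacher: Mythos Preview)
Your proposal is correct and matches the paper's approach exactly: the paper states that the theorem ``follows immediately'' from Theorem~\ref{thm:bomp} once the preceding theorem has supplied block orthogonality and the block-coherence bound $\mu_{B}\le r/k$, and you have simply spelled out that substitution and the monotonicity in $\mu_{B}^{-1}$ in full. One very small quibble: $d\le\lfloor k/r\rfloor$ only gives $r/k\le 1/d$, not strict inequality, so in the boundary case $d=k/r$ the bound degenerates to $s<1$; this does not affect correctness of the sufficient condition, only whether it is informative.
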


\section{Concluding Remarks : }
In our present work, we have constructed block orthogonal binary matrices via Euler Squares with low block coherence which supports recovery of block sparse signals. We have also introduced 
and constructed Generalized Euler Squares (GES) by evaluating higher degree polynomials over a finite field. The binary matrices constructed from GES have been shown to possess better aspect ratio compared to their counterparts generated using Euler Squares.
Finally, block orthogonal structure of the GES based binary matrices has been established.



\section{\bf Acknowledgments}
The first author is thankful for the support that he receives from Science and Engineering Research Board (SERB), Government of India (PDF/2017/002966).
 \section*{References} 

\end{document}